\def\R{\mathbb{R}}
\def\N{\mathbb{N}}
\def\cA{\mathcal{A}}
\def\cF{\mathcal{F}}
\def\cI{\mathcal{I}}
\def\cS{\mathcal{S}}
\def\a{\alpha}
\def\d{\delta}
\def\D{\Delta}
\def\o{\omega}
\def\oo{\overline{\omega}}
\def\G{\Gamma}
\def\GD{{\Gamma_{\rm D}}}
\def\p{\partial}
\def\Th{\mathcal{T}_h}
\def\Tl{\mathcal{T}_\ell}
\def\Thsym{\mathcal{T}_h^\mathrm{sym}}
\def\Tlsym{\mathcal{T}_\ell^\mathrm{sym}}
\def\Nh{\mathcal{N}_h}
\def\Eh{\mathcal{E}_h}
\def\Ih{\mathcal{I}_h}
\def\hIh{\widehat{\mathcal{I}}_h}
\def\Pred{P_3^\mathrm{red}}
\def\Sdkt{\cS^\mathrm{dkt}}
\def\Idkt{\cI^\mathrm{dkt}_h}
\def\tE{\widetilde{E}}
\def\tEh{\widetilde{E}_h}
\def\tEpenh{\widetilde{E}_{\mathrm{pen},h}}
\def\DD{{\rm D}}
\def\wto{\rightharpoonup}
\def\estop{\varepsilon_\mathrm{stop}}
\newcommand{\dv}[1]{\,{\mathrm d}#1}
\DeclareMathOperator{\diver}{div}
\let\oldmarginpar\marginpar
\renewcommand\marginpar[1]{
  \oldmarginpar[\raggedleft\footnotesize #1]
  {\raggedright\footnotesize #1}}
\theoremstyle{definition}
\newtheorem{definition}{Definition}
\newtheorem{remark}[definition]{Remark}
\theoremstyle{plain}
\newtheorem{proposition}[definition]{Proposition}
\newtheorem{theorem}[definition]{Theorem}
\newtheorem{corollary}[definition]{Corollary}
\newtheorem{algorithm}[definition]{Algorithm}
\numberwithin{definition}{section}
\numberwithin{equation}{section}
\begin{document}
\title[Simulating Constrained Bilayer Plate Bending]{Stable Gradient Flow Discretizations for Simulating Bilayer Plate Bending with Isometry and Obstacle Constraints}
\author{Sören Bartels, Christian Palus}
\address{Department of Applied Mathematics \\
University of Freiburg}
\date{\today}
\keywords{nonlinear elasticity, bilayer plates, bending energy, isometries, discrete Kirchhoff triangles, obstacle constraints}
\subjclass[2010]{65N12,65N30,74K20}
\begin{abstract}
  Bilayer plates are compound materials that exhibit large bending deformations when exposed to environmental changes that lead to different mechanical responses in the involved materials. In this article a new numerical method which is suitable for simulating the isometric deformation induced by a given material mismatch in a bilayer plate is discussed. A dimensionally reduced formulation of the bending energy is discretized generically in an abstract setting and specified for discrete Kirchhoff triangles; convergence towards the continuous formulation is proved. A practical semi-implicit discrete gradient flow employing a linearization of the isometry constraint is proposed as an iterative method for the minimization of the bending energy; stability and a bound on the violation of the isometry constraint are proved. The incorporation of obstacles is discussed and the practical performance of the method is illustrated with numerical experiments involving the simulation of large bending deformations and investigation of contact phenomena.
\end{abstract}
\maketitle

\section{Introduction}

\subsection{Scope} Modern nano-scale applications motivate the development of mathematical methods that are suitable for simulating the bending of bilayer plates (see~\cite{BBMN18}). We investigate the numerical treatment of a nonlinear two-dimensional bilayer plate bending model. Our results improve the methods described in~\cite{bartels2017} in several ways: (1) We employ a general finite element space based on triangular elements instead of quadrilaterals and thus avoid restrictive mesh conditions; (2) our scheme is fully practical as it only requires the solution of one linear system in every (pseudo-)time step of the discrete gradient flow  instead of a non-convex minimization problem; (3) we discuss the iterative treatment of an obstacle constraint in the minimization problem. 
We present our method in an abstract framework applicable to general finite element methods. The practical performance is illustrated with numerical experiments.

\subsection{Bilayer plate model} Bilayer plates are made from compound materials consisting of two layers with slightly different mechanical properties. An external influence, e.\,g. a temperature change, might cause one of the layers to contract while the other one expands, which leads to a \emph{material mismatch} and thus induces a bending deformation of the plate. We consider an extended Kirchhoff plate model which only allows for pure bending deformations of an initially flat two-dimensional plate, i.\,e. no stretching or shearing is supposed to occur in the deformed configuration of the plate. The use of this dimensionally reduced plate model has been rigorously justified in~\cite{schmidt2007, schmidt2007b} extending results from~\cite{friesecke2002, friesecke2002b}. Given a domain $\o \subset \R^2$ describing the reference configuration of the bilayer plate and some parameter $\a \in \R$, we seek minimizing deformations $y \colon \o \to \R^3$ for the elastic energy
\begin{equation}\label{eq:energy}
  E[y] = \frac 1 2 \int_\o |H(y) -\a I_2|^2 \dv{x} - \int_\o f \cdot y \dv{x},
\end{equation}
subject to the isometry constraint 
\begin{equation}\label{eq:iso-const}
  [\nabla y]^\top \nabla y = I_2,
\end{equation}
which reflects the pure bending condition mentioned above. Throughout this article, $I_2$ denotes the $2$-dimensional identity matrix and $H(y)$ the second fundamental form of the parametrized surface given by the deformation~$y$, i.\,e. $H_{ij}(y) = \nu \cdot \p_i\p_j y$ with the unit normal $\nu = \p_1 y \times \p_2 y$. 
The parameter $\a$ represents a homogeneous and isotropic material mismatch between the two layers.
We prescribe the boundary conditions $y = y_\DD$ and $\nabla y = \phi_\DD$ on a subset~$\GD$ of the boundary~$\p\o$ with positive one-dimensional length. The boundary data are assumed to be compatible with the density result in~\cite{hornung} in the sense that there exists an isometry $\widetilde{y}_\DD \in H^2(\o)^3$, such that $y_\DD = \widetilde{y}_\DD|_\GD$ as well as $\phi_\DD = \nabla \widetilde{y}_\DD|_\GD$, and that every isometry with this property can be approximated in $H^2(\o)^3$  by smooth isometries with the same values on $\GD$. 

The existence of minimizers for the constrained problem follows from the direct method in the calculus of variations.
The term $\a I_2$ in~\eqref{eq:energy} describes stress inherent in the plate when no outer body forces $f \colon \o \to \R^3$ are affecting it. 
In the absence of intrinsic strain (i.\,e. if $\alpha = 0$), the energy functional is quadratic and its numerical treatment was investigated in~\cite{bartels2013}. If $\alpha \neq 0$, a critical nonlinear term occurs and the numerical treatment becomes more delicate.
For any sufficiently smooth regular surface satisfying the isometry constraint~\eqref{eq:iso-const}, i.\,e. $\p_i y \cdot \p_j y = \d_{ij}$, we have for the Frobenius norm of the second fundamental form that
\[
|H|^2 = |D^2y|^2.
\]
This allows us to rewrite the energy $E[y]$ in~\eqref{eq:energy} as
\begin{equation}\label{eq:energy-re}
  \tE[y] = \frac 1 2 \int_\o |D^2 y|^2 \dv{x} - \a \int_\o \D y \cdot[\p_1 y \times \p_2 y] \dv{x} + \a^2 |\o| - \int_\o f \cdot y \dv{x}.
\end{equation}
Following~\cite{bartels2015}, we define the \emph{set $\cA$ of admissible deformations} as
\begin{equation}
  \cA = \left\{ y \in H^2(\o)^3 : y|_{\GD} = y_\DD,\> \nabla y|_{\GD} = \phi_\DD,\> [\nabla y]^\top \nabla y = I_2 \text{ a.\,e. in } \o \right\}
\end{equation}
and its \emph{tangent space at a point} $y \in H^2(\o)^3$ as
\begin{equation}
  \cF[y] = \left\{ w \in H^2(\o)^3 : w|_{\GD} = 0,\> \nabla w|_{\GD} = 0,\> [\nabla w]^\top \nabla y + [\nabla y]^\top \nabla w = 0 \text{ a.\,e. in } \o \right\}.
\end{equation}

We employ a spatial discretization based on discrete Kirchhoff triangles. The degrees of freedom for these finite elements are given by nodal function values and derivatives, cf.~\cite{batoz1980}.
These elements are easy to implement and the (linearized) isometry constraint can be explicitly enforced at the nodes of the triangulation. The minimization of the corresponding discrete energy functional is then realized via the semi-implicit discrete gradient flow defined by
\begin{align*}
\bigl(d_t y^k, v\bigr)_* = -&\bigl(D^2 y^k,D^2 v\bigr) + \bigl(f,v\bigr) + \a \bigl(\D v, \p_1 y^{k-1} \times \p_2 y^{k-1}\bigr) &\\
&+ \a \bigl(\D y^{k-1}, \p_1 v \times \p_2 y^{k-1}\bigr) + \a \bigl(\D y^{k-1}, \p_1 y^{k-1} \times \p_2 v\bigr)
\end{align*}
for all  $v\in\cF[y^{k-1}]$, where $d_t y^k = \tau^{-1}(y^k - y^{k-1})$ denotes the backward difference quotient with $\tau > 0$ and $(\cdot,\cdot)_*$ a scalar product on $\{w \in H^2(\o)^3 :  w|_{\GD} = 0,\> \nabla w|_{\GD} = 0\}$. Given an initial value $y^0 \in \cA$, we regard $d_t y^k$ as the unknown and find that there exists a unique solution $d_t y^k \in \cF[y^{k-1}]$ in every iteration step $k>0$.
This leads to a numerical scheme with guaranteed energy decay that converges to stationary configurations. We note that, in general, the iterates will not satisfy the isometry constraint exactly. Instead, the updates in every (pseudo-)time step are taken from the respective tangent spaces, which result from a linearization of the isometry constraint, and no projection of the new iterate onto the admissible set is included. Following the observations in~\cite{bartels2013,bartels2017}, we are able to prove that the isometry constraint is satisfied up to a small error which is independent of the number of iterations and controlled by the step size $\tau$.

 We then go on to discuss the inclusion of an obstacle constraint via a penalty method. This approach is numerically robust and easy to realize as it can be included in the discrete gradient flow without further difficulties. We note that recently an approach based on discontinuous Galerkin elements has been proposed for the discretization of plate beding models (cf.~\cite{BonNocNto19, BonNocNto20}).

\subsection{Outline} The outline of the article is as follows. In Section 2 we present an abstract framework for analyzing the convergence of a finite element that illustrates the key ingredients of our analysis. In Section 3 we collect some preliminary results about discrete Kirchhoff triangle elements, which we use in the discretization of the energy functional~\eqref{eq:energy-re}. The main results are contained in Sections 4, 5 and 6. In Section 4 we prove the $\G$-convergence of the discretized energy functionals to the continuous energy. In Section 5 we propose a new method for the numerical minimization of the discrete energies and prove its stability as well as an error bound for the constraint violation. Section 6 discusses the incorporation of an obstacle constraint. We conclude the article with numerical experiments in Section 7, that serve to demonstrate the practical performance of our method.

\section{Abstract framework}

\subsection{An abstract discretization result} 
In an abstract setting, we are concerned with the numerical minimization of the energy functional
\[
I[y] = \frac 1 2 a(y,y) - \a \int_\o \Delta y \cdot(\p_1 y \times \p_2 y) \dv{x}
\]
over a subset of $H^2_\GD(\o) \subset H^2(\o)$ satisfying the boundary conditions $y|_{\GD} = y_\DD$, $\nabla y|_{\GD} = \phi_\DD$ and subject to a (possibly nonlinear) constraint $G[y] = 0 \text{ in } \o$, $G \in C(H^1(\o),L^1(\o))$, which we include in the energy functional by setting $I[y] = \infty$ for all $y \in H^2_\GD(\o)$ for which $G[y]$ is not constantly zero in $\o$. Here, $a$ is a bilinear form that is bounded with constant 1 and coercive on $H^2_\GD$. In this abstract setting, we consider for $h>0$ the discretized constraint $G_h[y_h] = 0$ together with a discretization of the energy functional 
\[
I_h[y_h] = \frac 1 2 a_h(y_h,y_h) - \a \int_\o Q_h[\Delta_h y_h \cdot(\p_1 y_h \times \p_2 y_h)] \dv{x},
\]
defined on a finite dimensional subspace $V_h \subset H^1(\o) \cap C(\oo)$ with a well-defined interpolation operator $\cI_{V_h}: \widetilde{V} \to V_h$ for some dense subset $\widetilde{V} \subset \{y \in H^2(\o) : G[y] = 0 \text{ in } \o \}$, such that $\cI_{V_h}[v]$ converges to $v$ in $H^1(\o)$ as $h$ tends to zero for all $v \in \widetilde{V}$.
Analogously to the continuous case, $a_h$ is a bilinear form that is coercive on $V_h$ and uniformly bounded for all $h>0$. Furthermore, $Q_h$ is a (quasi-)interpolation operator on $L^1(\o)$ and $\Delta_h$ is an operator that is well defined on $V_h$ and approximates the Laplacian if applied to functions in $H^2(\o)$. Under the assumption that the boundary conditions can be satisfied exactly in the discrete space $V_h$, we obtain the following result. 
\begin{theorem}\label{thm:abstr}
  Assume that the discretization $G_h$ of the constraint map satisfies the compatibility conditions $G_h[\cI_{V_h}[y]] = 0$ in $\o$ for all $y \in \widetilde{V}$ and $\|G[y_h]\|_{L^1(\o)} \to 0$ for every sequence $(y_h)$ with $y_h \in V_h$ and $G_h[y_h] = 0$. Let the interpolation operator $Q_h$ be chosen in such a way, that
  \[
  \int_\o Q_h[\Delta_h y_h \cdot(\p_1 y_h \times \p_2 y_h)] \dv{x} - \int_\o \Delta y \cdot(\p_1 y \times \p_2 y) \dv{x} \to 0
  \]
  whenever the sequence $(a_h(y_h,y_h))_h$ is bounded and $y_h \to_{H^1} y$ for $y \in H^2(\o)$.
  If the discrete bilinear form $a_h$ is chosen such that $a_h(\cdot,\cdot)$ converges to the square of the $H^2$-seminorm in the sense of $\G$-convergence and such that for every $y \in \widetilde{V}$ the sequence $(y_h) \subset H^1$ with $y_h = \cI_{V_h}[y] \in V_h$ is a recovery sequence, i.\,e. $\limsup_{h \to 0} a_h(y_h,y_h) \le |y|_{H^2(\o)}$, then the discrete energy functionals $I_h$ converge to the energy $I$, also in the sense of $\G$-convergence with respect to strong convergence in $H^1(\o)$.
  As a consequence, every accumulation point $y$ of a sequence of (almost) minimizers $(y_h)_{h>0}$ of $I_h$ is a minimizer of $I$, belongs to $H^2(\o)$ and satisfies the boundary conditions as well as $G[y]=0$ almost everywhere in $\o$.
\end{theorem}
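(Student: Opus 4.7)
The plan is to organize the argument as a standard $\Gamma$-convergence proof: establish a $\liminf$ inequality, construct a recovery sequence, and then invoke equicoercivity to obtain the consequence for accumulation points of (almost) minimizers. Throughout, boundary data are handled using the implicit assumption that $y_\DD$ and $\phi_\DD$ can be matched exactly in $V_h$ and persist under the interpolation $\cI_{V_h}$ and under $H^1$-convergence of sequences with bounded $H^2$-seminorm.

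For the $\liminf$ inequality I would fix $y_h \to y$ in $H^1(\o)$ with $\liminf_h I_h[y_h] < \infty$ (otherwise nothing is to be shown). Passing to a subsequence with $I_h[y_h]\le C$, the definition of $I_h$ forces $G_h[y_h]=0$, so the compatibility hypothesis yields $\|G[y_h]\|_{L^1(\o)}\to 0$; continuity of $G\colon H^1(\o)\to L^1(\o)$ then gives $G[y]=0$. To extract a uniform bound on $a_h(y_h,y_h)$, I would estimate the cubic term by the two-dimensional Gagliardo--Nirenberg inequality
\[
  \|\nabla y_h\|_{L^4}^2 \le C\,\|\nabla y_h\|_{L^2}\,|y_h|_{H^2(\o)},
\]
combined with Young's inequality and the uniform coercivity of $a_h$, obtaining an absorption of the form
\[
  \Bigl|\a\int_\o Q_h[\D_h y_h\cdot(\p_1 y_h \times \p_2 y_h)]\dv{x}\Bigr| \le \tfrac{1}{4} a_h(y_h,y_h) + C\bigl(1+\|y_h\|_{H^1}^4\bigr).
\]
Since $\|y_h\|_{H^1}$ is bounded, this together with $I_h[y_h]\le C$ yields boundedness of $a_h(y_h,y_h)$. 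The $\Gamma$-$\liminf$ hypothesis on $a_h$ then provides $y\in H^2(\o)$ with $|y|^2_{H^2(\o)}\le \liminf_h a_h(y_h,y_h)$, while the hypothesis on $Q_h$ passes the cubic term to the limit. Assembling these gives $I[y]\le \liminf_h I_h[y_h]$.

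For the recovery sequence I would first treat $y\in\widetilde{V}$ and set $y_h:=\cI_{V_h}[y]$. The compatibility assumption gives $G_h[y_h]=0$, the interpolation property gives $y_h\to y$ in $H^1$, the recovery hypothesis gives $\limsup_h a_h(y_h,y_h)\le |y|^2_{H^2(\o)}$, and the $Q_h$-hypothesis provides convergence of the cubic term; together this yields $\limsup_h I_h[y_h]\le I[y]$. For a general $y$ with $I[y]<\infty$, density of $\widetilde{V}$ in $\{v\in H^2(\o):G[v]=0\}$ (in the $H^2$-topology) furnishes $y^{(n)}\in\widetilde{V}$ with $y^{(n)}\to y$ in $H^2(\o)$; $H^2$-continuity of the quadratic and cubic parts of $I$ gives $I[y^{(n)}]\to I[y]$, and a standard Attouch diagonalization extracts from $\cI_{V_h}[y^{(n)}]$ a single recovery sequence $y_h\to y$ in $H^1$ with $\limsup_h I_h[y_h]\le I[y]$. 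Finally, the absorption estimate above also delivers equicoercivity: for an (almost) minimizer $y_h$, comparison with the interpolant of any fixed element of $\widetilde{V}$ bounds $I_h[y_h]$, hence $a_h(y_h,y_h)$, hence the $H^2$-seminorm of any $H^1$-limit; Rellich--Kondrachov combined with the fixed boundary data yields $H^1$-precompactness. The standard $\Gamma$-convergence plus equicoercivity conclusion then gives that every accumulation point $y$ is a minimizer of $I$; finiteness of $I[y]$ directly encodes $y\in H^2(\o)$, $G[y]=0$ a.\,e., and the boundary conditions.

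The main obstacle I expect is the uniform control of the critical cubic nonlinearity by the discrete quadratic form, since it is needed both in the $\liminf$ step (to reduce to the case of bounded $a_h(y_h,y_h)$ covered by the $Q_h$ hypothesis) and in the equicoercivity argument. It relies either on a uniform discrete Sobolev embedding on $V_h$ or, more abstractly, on the ambient continuous embedding applied via $V_h\subset H^1(\o)$ together with uniform coercivity of $a_h$. The Attouch diagonalization is routine, but one has to be careful that the $H^2$-approximation by elements of $\widetilde{V}$ can be chosen compatible with the prescribed boundary data, which in the concrete application is precisely what the density result of~\cite{hornung} supplies.
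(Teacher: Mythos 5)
Your overall architecture (liminf inequality, recovery sequence via nodal interpolation, diagonalization, equicoercivity for the consequence) mirrors the paper's and is the right route. The Attouch diagonalization and the sketch of $H^1$-precompactness of almost minimizers are correct technicalities that the paper leaves implicit; nothing wrong there.

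The genuine gap is in the claimed absorption estimate for the cubic term. In two space dimensions the Gagliardo--Nirenberg inequality gives
$\|\nabla y_h\|_{L^4}^2 \lesssim \|\nabla y_h\|_{L^2}\,\|\nabla y_h\|_{H^1}$,
so with H\"older one only gets
$\bigl|\int_\o Q_h[\D_h y_h\cdot(\p_1 y_h\times\p_2 y_h)]\bigr| \lesssim \|\D_h y_h\|_{L^2}\,\|\nabla y_h\|_{L^4}^2 \lesssim a_h(y_h,y_h)\,\|\nabla y_h\|_{L^2}$
after identifying both $\|\D_h y_h\|_{L^2}$ and the $H^1$-norm of $\nabla y_h$ with $a_h(y_h,y_h)^{1/2}$ via coercivity. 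This is \emph{linear} in $a_h(y_h,y_h)$, not something like $a_h(y_h,y_h)^{1/2}\cdot(\text{lower order})$, so Young's inequality cannot absorb it into $\tfrac14 a_h(y_h,y_h)$ regardless of the size of $\|y_h\|_{H^1}$; the cubic term is exactly scale-critical relative to the quadratic one, which is also visible by replacing $y$ by $\mu y$ and comparing $\mu^3$ with $\mu^2$. Consequently your claimed bound $\le \tfrac14 a_h(y_h,y_h) + C(1+\|y_h\|_{H^1}^4)$ does not follow, and neither the reduction of the $\liminf$ step to bounded $a_h(y_h,y_h)$ nor the equicoercivity argument closes as written.

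What actually makes the bound on $a_h(y_h,y_h)$ available is the \emph{constraint} $G_h[y_h]=0$, not an interpolation inequality. In the concrete case this is exactly Proposition~\ref{prop:coerc}: the nodal isometry condition forces $|\p_i y_h(z)|=1$, hence $\|\p_1 y_h\times\p_2 y_h\|_{L_h^\infty}\le 1$, so the cubic term is bounded by $c\|\D_h y_h\|_{L^1}\lesssim a_h(y_h,y_h)^{1/2}$ and the absorption is trivial. The abstract theorem tacitly relies on the constraint supplying this kind of sublinear control; your proposal would need an explicit hypothesis to that effect (or should simply invoke the analogue of Proposition~\ref{prop:coerc}) before the $Q_h$-hypothesis, which presupposes boundedness of $a_h(y_h,y_h)$, can be applied. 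I also note that the paper's own justification of this boundedness in the abstract proof (``$a_h(y_h,y_h)\le c_b\|y_h\|^2_{H^1(\o)}$ from the uniform bound on $a_h$'') is terse and not literally compatible with $a_h$ $\G$-converging to the $H^2$-seminorm squared; your instinct that something additional is needed here is correct, but the fix you propose does not work because of the criticality of the nonlinearity.
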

\begin{proof}
  The proof of the $\G$-convergence result consists of two parts: (i) the \emph{asymptotic lower bound} property, i.\,e. $I[y] \le \liminf_{h \to 0} I_h[y_h]$ holds for every sequence $(y_h) \subset H^1(\o)$ with $y_h \to_{H^1} y$, and (ii) the \emph{existence of recovery sequences}, i.\,e. for every $y \in H_\GD^2(\o)$ with $G[y]=0$, there exists a sequence $(y_h)$, such that $y_h \in V_h$ and $G_h[y_h] = 0$ and $I[y] \ge \limsup_{h \to 0} I_h[y_h]$.
  
  (i) Let $(y_h)_h$ be a sequence with $y_h \in V_h$, such that $y_h \to_{H^1} y$. We may assume that $I_h[y_h] \le C$ uniformly in $h$ (perhaps for a subsequence), since otherwise we have $\liminf_{h \to 0} I_h[y_h] = \infty$ and there is nothing to be shown. We establish the $\liminf$-inequality by showing that
  \[
  a(y,y) \le \liminf_{h \to 0} a_h(y_h,y_h)
  \]
  and
  \[
  \int_\o Q_h[\Delta_h y_h \cdot(\p_1 y_h \times \p_2 y_h)]\dv{x} \to \int_\o \Delta y \cdot(\p_1 y \times \p_2 y) \dv{x}
  \]
  as $h \to 0$.
  From the assumed $\G$-convergence of the sequence $(a_h(\cdot,\cdot))_h$ we have that
  \[
  \|D^2 y\|_{L^2(\o)}^2 \le \liminf_{h \to 0} a_h(y_h,y_h),
  \]
  and hence $a(y,y) \le \liminf_{h \to 0} a_h(y_h,y_h)$ by the boundedness of the bilinear form.
  The convergence of the second term in the energy functionals follows from our assumptions, since we have $a_h(y_h,y_h) \le c_b \|y_h\|^2_{H^1(\o)}$ from the uniform bound on $a_h$ and since the convergent sequence $(y_h)_h$ is bounded in $H^1(\o)$. Since we have $G_h[y_h] = 0$, the compatibility conditions for the discretized contraint imply that $G[y] = 0$. Thus, we deduce that $I[y] \le \liminf_{h \to 0} I_h[y_h]$.
  
  (ii) Let $y \in H^2(\o)$ with $G[y] = 0$ in $\o$. We may assume that $y \in \widetilde{V}$, since $\widetilde{V}$ is dense in the set $\{y \in H^2(\o) : G[y] = 0 \text{ in } \o \}$. We have, by assumption, that the choice of the sequence $(y_h)_h$ with $y_h = \cI_{V_h}[y] \in V_h$ satisfies $G_h[y_h] = 0$ as well as 
  \[
  a(y,y) = \| D^2 y \|^2_{L^2(\o)} \ge \limsup_{h \to 0} a_h(y_h,y_h).
  \]
  The convergence of the second terms in the energy functionals is a consequence of the convergence $y_h \to_{H^1} y$ and our assumptions on the interpolation operator $Q_h$ as in (i). Hence, it follows that $I[y] \ge \limsup_{h \to 0} I_h[y_h]$.
\end{proof}

\begin{remark}
  In the setting of Theorem~\ref{thm:abstr} the isometry constraint can be perturbed and imposed as an inequality. This avoids assuming density of smooth isometries which is especially restrictive when compatibility of given boundary conditions is required. A simple regularization or quasi-interpolation can thus be used instead of the nodal interpolation operator for the definition of a recovery sequence. The relaxation of the isometry constraint is compatible with the discretization in Section~\ref{sec:discr} that forms the basis of our numerical scheme. 
\end{remark}

\section{Preliminaries}

\subsection{Discrete Kirchhoff triangles} 
We employ a spatial discretization based on DKT elements which describe the deformed surface of the plate via its nodal displacement and tangent vectors. In the following we consider a regular triangulation~$\Th$ of the domain~$\o$ into triangles with the index~$h$ denoting the maximum of the diameters $h_T$ for the triangles $T \in \Th$. We let~$\Nh$ and~$\Eh$ denote the set of vertices and edges of elements, respectively, and assume that the Dirichlet boundary~$\GD$ is matched exactly by a subset of~$\Eh$. 
For an integer $k\ge0$ we let~$P_k(T)$ be the set of polynomials of degree less than or equal to~$k$ on~$T \in \Th$ and define the space 
\[
\Pred(T) = \Bigl\{ p \in P_3(T) : p(x_T) = \tfrac{1}{6}\!\!\! \sum_{z \in \Nh \cap T}\bigl(2p(z) - \nabla p(z) \cdot [z - x_T]\bigr) \Bigr\},
\]
where one degree of freedom has been eliminated by prescribing the function value at the center of gravity $x_T = \frac{1}{3}\sum_{z \in \Nh \cap T}z$ of $T$. We then define the finite element spaces
\[
\Sdkt(\Th) = \bigl\{w_h \in C(\oo) : w_h|_T \in \Pred(T) \text{ for all } T \in \Th \text{ and } \nabla w_h \text{ is continuous in } \Nh \bigr\},
\]
and
\[
\cS^2(\Th) = \bigl\{ \theta_h \in C(\oo) : \theta_h|_T \in P_2(T) \text{ for all } T \in \Th \bigr\}.
\]
Note that the degrees of freedom for functions in $\Sdkt(\Th)$ are given by the function values and derivatives at the vertices $\Nh$ of the triangulation, cf.~\cite{bartels2015},~\cite{braess_2007} for details. Thus, the interpolant $\Idkt \colon H^3(\o) \to \Sdkt(\Th)$ given by $\Idkt[y](z) = y(z)$ as well as $\nabla\Idkt[y](z) = \nabla y(z)$ for all $z \in \Nh$ is well defined.
Since $\Idkt$ is exact on the space $P_2(T)$ of polynomials of degree at most two on an element $T \in \Th$, the Bramble-Hilbert lemma yields the interpolation estimate (cf.~\cite[Theorem 4.4.4]{brennerscott2008})
\begin{equation}\label{eq:interpol-dkt}
\| w - \Idkt w \|_{L^p(T)} + h_T \| \nabla w - \nabla \Idkt w \|_{L^p(T)} + h_T^2 \| D^2 w - D^2 \Idkt w \|_{L^p(T)} \le c h_T^3 \| D^3 w\|_{L^p(T)}
\end{equation}
for all $w \in W^{3,p}(T)$ and $1 \le p < \infty$ with a constant $c>0$ not depending on $h_T$.

\subsection{Discrete gradient operator} For an edge $E \in \Eh$ let~$z_E^1, z_E^2 \in \Nh$ denote its two endpoints, $z_E = \frac{1}{2}(z_E^1 + z_E^2)$ its midpoint, $t_E$ a normalized tangent vector and $n_E$ a unit normal to~$E$. 
The \emph{discrete gradient operator} $\nabla_h \colon \Sdkt(\Th) \to \cS^2(\Th)^2$ is then defined as the operator that maps a function $y_h \in \Sdkt(\Th)$ to the uniquely defined function $\theta_h = \nabla_h y_h \in \cS^2(\Th)^2$ satisfying
\begin{align*}
\theta_h(z) &= \nabla y_h(z), \\
\theta_h(z_E) \cdot t_E &= \nabla y_h(z_E) \cdot t_E,\\
\theta_h(z_E) \cdot n_E &= \frac{1}{2}[\nabla y_h(z_E^1) + \nabla y_h(z_E^2)]\cdot n_E,
\end{align*}
for all $z \in \Nh$ and $E \in \Eh$. The mapping can naturally be extended to functions $y \in H^3(\o) \subset C^1(\o)$ by applying the discrete gradient to the interpolant $\Idkt y$. As an immediate consequence of the definition we have the equality 
\[
\nabla_h y_h (z_E) = \frac{1}{2} ([\nabla y_h(z_E^1) + \nabla y_h(z_E^2)]\cdot n_E)n_E + (\nabla y_h(z_E) \cdot t_E)t_E 
\]
for every edge $E \in \Eh$. The result stated below essentially follows the arguments presented in~\cite{bartels2015, bartels2017}.

\begin{proposition}[properties of the discrete gradient operator] \label{prop:dgrad}
  Let $(\Th)_{h>0}$ be a sequence of regular triangulations. There exist constants $c_1, c_2, c_3, c_4 > 0$, independent of $h$ and $h_T$, such that the following estimates involving the discrete gradient operator hold:\\
  (i) For all $w_h \in \Sdkt(\Th)$ we have that
  \[
  c_1^{-1} \| \nabla w_h \|_{L^2(\o)} \le \| \nabla_h w_h \|_{L^2(\o)} \le \| c_1\nabla w_h \|_{L^2(\o)}.
  \]
  (ii) For all $w_h \in \Sdkt(\Th)$ and $T \in \Th$ we have that
  \[
  c_2^{-1} \| D^2 w_h \|_{L^2(T)} \le \| \nabla \nabla_h w_h \|_{L^2(T)} \le \| c_2 D^2 w_h \|_{L^2(T)}.
  \]
  (iii) For all $w \in H^3(\o)$ and $T \in T_h$ we have that
  \[
  \| \nabla w - \nabla_h w \|_{L^2(T)} + h_T\| D^2 w - \nabla \nabla_h w \|_{L^2(T)} \le c_3 h_T^2\| D^3 w \|_{L^2(T)}.
  \]
  (iv) For all $w_h \in \Sdkt(\Th)$ and $T \in \Th$ we have that
  \[
  \|\nabla_h w_h - \nabla w_h\|_{L^2(T)} \le c_4 h_T \| \nabla \nabla_h w_h \|_{L^2(T)}.
  \]
\end{proposition}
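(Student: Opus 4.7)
The plan is to treat the four estimates in turn, relying on a reference-element/scaling strategy for the two norm equivalences, on polynomial reproduction together with the Bramble–Hilbert lemma for the interpolation estimate, and on a Poincaré-type argument on a finite-dimensional polynomial space for the last bound.

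For (i) and (ii), I would fix a reference triangle $\widehat T$ and use the affine pull-backs of $w_h|_T \in \Pred(T) \subset P_3(T)$ and of $\nabla_h w_h|_T \in P_2(T)^2$. Both $\|\nabla \hat w_h\|_{L^2(\widehat T)}$ (resp.\ $\|D^2 \hat w_h\|_{L^2(\widehat T)}$) and $\|\nabla_h \hat w_h\|_{L^2(\widehat T)}$ (resp.\ $\|\nabla \nabla_h \hat w_h\|_{L^2(\widehat T)}$) are semi-norms on the finite-dimensional quotient of $\Pred(\widehat T)$ by constants (respectively by affine functions), and both vanish exactly on those subspaces because the edge-based defining conditions for $\nabla_h$ reproduce constant gradients. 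Equivalence of semi-norms on finite-dimensional quotient spaces, combined with the standard affine scaling identities for $L^2$-norms of gradients on triangles, yields (i) and (ii) with $h_T$-independent constants; note that for (ii) one passes through a single element, so regularity of $\Th$ is not strictly needed here.

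For (iii), the crucial observation is that $\nabla_h$ is exact on $P_2(T)$: if $w \in P_2(T)$ then $\nabla w$ is affine, so $\frac{1}{2}(\nabla w(z_E^1)+\nabla w(z_E^2))\cdot n_E = \nabla w(z_E)\cdot n_E$, and the nodal and tangential edge conditions in the definition of $\nabla_h$ are trivially satisfied, giving $\nabla_h w = \nabla w$. The operator $w \mapsto \nabla_h w$ thus preserves $P_2$ and maps into $P_2(T)^2$, and by (ii) together with the DKT interpolation estimate \eqref{eq:interpol-dkt} it is bounded $H^3(T) \to H^1(T)^2$ with $h_T$-independent operator norm after appropriate scaling. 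Applying the Bramble–Hilbert lemma on the reference element and scaling back to $T$ delivers the $h_T^2\|D^3 w\|_{L^2(T)}$ bound for both $\nabla w-\nabla_h w$ and $h_T(D^2 w - \nabla \nabla_h w)$.

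For (iv), I would exploit that $\nabla_h w_h - \nabla w_h$ belongs to $P_2(T)^2$ and vanishes at the three vertices of $T$ by the nodal condition $\nabla_h w_h(z)=\nabla w_h(z)$. On the reference triangle, the semi-norm $v \mapsto |v|_{H^1(\widehat T)}$ is a norm on the finite-dimensional subspace of $P_2(\widehat T)^2$ annihilated at the three vertices, hence equivalent to $\|v\|_{L^2(\widehat T)}$; pushing this back to $T$ gives a Poincaré-type inequality $\|\nabla_h w_h-\nabla w_h\|_{L^2(T)} \le c\, h_T\, \|\nabla(\nabla_h w_h-\nabla w_h)\|_{L^2(T)}$. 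Then $\nabla(\nabla w_h)=D^2 w_h$ is controlled by $\|\nabla\nabla_h w_h\|_{L^2(T)}$ via (ii), and (iv) follows from the triangle inequality. The only real obstacle is the careful bookkeeping of affine scalings so that the constants are shown to be independent of $h_T$ (not merely of $h$); this is standard but tedious, and everything else reduces to semi-norm equivalence on finite-dimensional polynomial spaces on $\widehat T$.
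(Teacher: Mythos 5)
Your proposal is correct in substance and follows the same strategy as the paper for parts~(i)--(iii): identify the kernels of the two semi-norms, invoke equivalence of norms on finite-dimensional spaces, and scale; and for~(iii), use that $\nabla_h$ reproduces affine gradients (equivalently, is exact on $P_2(T)$) so that the Bramble--Hilbert lemma applies. For~(iv) you take a genuinely different route: you observe that $\nabla_h w_h - \nabla w_h \in P_2(T)^2$ vanishes at the three vertices of~$T$, apply a vertex-vanishing Poincar\'e inequality on~$T$ (via scaling from~$\widehat T$), and then use~(ii) to replace $D^2 w_h$ by $\nabla\nabla_h w_h$. The paper instead applies~(iii) to the polynomial $w_h$ itself and uses the inverse estimate $\|D^3 w_h\|_{L^2(T)} \le c\,h_T^{-1}\|D^2 w_h\|_{L^2(T)}$. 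Both arguments are valid; yours is arguably more self-contained because it does not rely on first proving~(iii), while the paper's is shorter given~(iii) is already in hand.

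One place where your write-up would need to be tightened: in~(i) and~(ii) you assert that the semi-norms ``vanish exactly on those subspaces because the edge-based defining conditions for $\nabla_h$ reproduce constant gradients.'' This justifies only the easy containment (if $\nabla w_h$ is constant then $\nabla_h w_h$ equals that constant, hence $\nabla\nabla_h w_h = 0$). The nontrivial direction of the kernel identification---that $\nabla_h w_h = 0$ forces $\nabla w_h = 0$, or that $\nabla_h w_h$ constant forces $\nabla w_h$ constant---does not follow from reproduction of constants. This is precisely what the paper verifies carefully: $\nabla_h w_h = 0$ gives vanishing tangential derivatives of $w_h$ at the endpoints and midpoints of each edge, so the cubic trace $w_h|_E$ is constant on each edge, and the defining condition of $\Pred(T)$ (which pins the interior degree of freedom) then forces $w_h|_T$ to be constant. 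Without this step the claimed semi-norm equivalence is unproven, so you should supply it before concluding.
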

\begin{proof}
  (i) It suffices to show that $\| \nabla w_h \|_{L^2(\o)} = 0$ if and only if $\| \nabla_h w_h \|_{L^2(\o)} = 0$. Given $w_h \in \Sdkt(\Th)$ with $\| \nabla w_h \|_{L^2(\o)} = 0$ we deduce that $\nabla w_h|_T = 0$ for all $T \in \Th$. 
  The definition of the discrete gradient immediately implies that we have $\nabla_h w_h|_T = 0$ for all $T \in \Th$ and, hence, $\| \nabla_h w_h \|_{L^2(\o)} = 0$. On the other hand, if we are given $w_h \in \Sdkt(\Th)$ such that $\| \nabla_h w_h \|_{L^2(\o)} = 0$, we deduce that $\nabla w_h(z) = 0$ for all $z \in \Nh$ and $\nabla w_h(z_E) = 0$ for all $E \in \Eh$. 
  In particular, the tangential derivatives of $w_h$ vanish at the endpoints and midpoints of $E$. Since $w_h$ is a cubic polynomial on the straight line extending $E$, we deduce that $w_h|_E$ is constant for all $E \in \Eh$. Noting that $w_h|_T \in \Pred(T)$ for every $T \in \Th$, i.\,e. the last remaining degree of freedom for the cubic polynomial $w_h|_T$ is prescribed, we infer that $w_h$ is constant on $\o$ and, hence, $\| \nabla w_h \|_{L^2(\o)} = 0$.\\
  (ii) Analogously to (i), it suffices to show that $\| D^2 w_h \|_{L^2(T)} = 0$ if and only if $\| \nabla \nabla_h w_h \|_{L^2(T)} = 0$. Given $w_h \in \Sdkt(\Th)$ with $\| D^2 w_h \|_{L^2(T)} = 0$ it follows that $\nabla w_h$ is constant and the definition of the discrete gradient operator immediately implies that $\nabla_h w_h$ is also constant yielding $\| \nabla \nabla_h w_h \|_{L^2(T)} = 0$.
  Conversely, if $\| \nabla \nabla_h w_h \|_{L^2(T)} = 0$, we infer that $\nabla_h w_h|_T$ is constant. Hence, $\nabla w_h(z)$ equals the same constant for all $z \in T \cup \Nh \cup \{ z_E : E \in \Eh \}$. Since $\nabla w_h|_T$ is a quadratic polynomial for every $T \in \Th$, it follows that $\nabla w_h$ is constant in $T$ and, therefore, $\| D^2 w_h \|_{L^2(T)} = 0$.\\
  (iii) If $\nabla w \in P_1(T)^2$ the interpolation obtained with the discrete gradient is exact, i.\,e. in this case we have that $\nabla w|_T = \nabla_h w|_T$. Thus, the Bramble-Hilbert lemma yields for $\nabla w \in H^2(\o)$ the asserted interpolation estimate. \\
  (iv) The estimate follows from (iii) and the inverse estimate $\| D^3 w_h \|_{L^2(T)} \le c h_T^{-1} \| D^2 w_h \|_{L^2(T)}$ for $w_h|_T \in \Pred(T)$.
\end{proof}

\begin{remark}
Note that as a consequence of property (iv) of the above proposition, we have that the mapping $y_h \mapsto \| \nabla \nabla_h y_h \|_{L^2(\o)}$ defines a seminorm on $\Sdkt(\Th)$ and a norm on every subspace of $\Sdkt(\Th)$ with prescribed clamped boundary conditions on $\GD$.
\end{remark}

\section{Discretization} \label{sec:discr}

\subsection{Discrete energy}
For our iterative minimization scheme we consider the discrete energy given for $y_h \in \cA_h$  by
\begin{equation}\label{eq:energy-disc}
  \tEh[y_h] = \frac 1 2 \int_\o |\nabla \nabla_h y_h|^2 \dv{x} - \a \int_\o \hIh^1 \{ \D_h y_h \cdot[\p_1 y_h \times \p_2 y_h] \}\dv{x} - \int_\o \hIh^1[f y_h] \dv{x},
\end{equation}
with the elementwise nodal interpolation operator $\hIh^1$ into piecewise linear $P1$-functions and where the discrete Laplacian $\D_h$ is defined via $\D_h = \diver \nabla_h$. The \emph{discrete admissible set} $\cA_h$ is defined analogously to the continuous case as 
\begin{align*}
  \cA_h = \{ y_h \in \Sdkt(\Th)^3 : \>&y_h(z) = y_\DD(z),\> \nabla y_h(z) = \phi_\DD(z) \text{ for all } z\in\Nh\cap\GD,\\
  & [\nabla y_h(z)]^\top \nabla y_h(z) = I_2 \text{ for all } z \in \Nh \}
\end{align*}
and its \emph{tangent space at a point} $y_h \in \cA_h$ as
\begin{align*}
  \cF_h[y_h] = \{ w_h \in \Sdkt(\Th)^3 : \>& w_h(z) = 0,\> \nabla w_h(z) = 0 \text{ for all } z\in\Nh\cap\GD,\\
  & [\nabla w_h(z)]^\top \nabla y_h(z) + [\nabla y_h(z)]^\top \nabla w_h(z) = 0 \text{ for all } z \in \Nh \}.
\end{align*}
If $y_h \notin \cA_h$, we set $\tEh[y_h] = \infty$. For ease of presentation, the body force $f$ is assumed to vanish in the following, its inclusion is straightforward.

\begin{proposition}[equicoercivity]\label{prop:coerc}
  Let the boundary data satisfy $y_\DD = \widetilde{y}_\DD|_\GD$ and $\phi_\DD = \nabla\widetilde{y}_\DD|_\GD$ for some $\widetilde{y}_\DD \in H^3(\o)^3$, and let $(y_h)_{h>0} \subset H^1(\o)^3$ be a sequence discrete of displacements.
  If the sequence $\tE_h[y_h]$ is uniformly bounded,
  then there exists a constant $C > 0$, independent of $h$, such that
  \[
  \| \nabla \nabla_h y_h \|_{L^2(\o)} \le C.
  \]
\end{proposition}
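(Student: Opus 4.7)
The plan is to bound the nonlinear (cubic) term in $\tEh[y_h]$ by a constant multiple of $X:=\|\nabla\nabla_h y_h\|_{L^2(\o)}$, so that the uniform bound on $\tEh[y_h]$ turns into a quadratic inequality in $X$ from which the desired uniform bound follows by Young's inequality. Since $\tEh[y_h]<\infty$ we necessarily have $y_h\in\cA_h$, so the discrete isometry constraint holds at every node: the columns $\p_1 y_h(z),\p_2 y_h(z)$ of $\nabla y_h(z)$ are orthonormal vectors in $\R^3$ for every $z\in\Nh$, and thus $|\p_1 y_h(z)\times \p_2 y_h(z)|=1$ at the nodes. This nodal identity is the only consequence of the discrete isometry I need.

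Set $g:= \D_h y_h\cdot[\p_1 y_h\times \p_2 y_h]$. Because the barycentric coordinates $(\lambda_z)_{z\in T\cap\Nh}$ on each $T\in\Th$ are nonnegative, the piecewise linear interpolation satisfies the pointwise bound $|\hIh^1[g](x)|\le\sum_{z\in T\cap\Nh}\lambda_z(x)|g(z)|$ on $T$. Combining the nodal inequality $|g(z)|\le |\D_h y_h(z)|$ (Cauchy--Schwarz in $\R^3$ and $|\p_1 y_h(z)\times\p_2 y_h(z)|=1$) with the exact $P_1$ quadrature $\int_T \lambda_z\dv x = |T|/3$ gives
\[
\Bigl|\int_\o \hIh^1[g]\dv x\Bigr|\le \frac{1}{3}\sum_{T\in\Th}|T|\sum_{z\in T\cap\Nh}|\D_h y_h(z)|.
\]
Since $\D_h y_h = \diver\nabla_h y_h$ is elementwise in $P_1$, norm equivalence on this finite dimensional space (with constants depending only on the shape regularity, hence uniform in $h$) yields $|T|\sum_{z\in T}|\D_h y_h(z)|\le c\,\|\D_h y_h\|_{L^1(T)}$. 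Summing over $T$, using $|\o|<\infty$ together with H\"older's inequality, and finally $\|\D_h y_h\|_{L^2(\o)}\le \sqrt{2}\,\|\nabla\nabla_h y_h\|_{L^2(\o)}$ (a componentwise Cauchy--Schwarz applied to $\diver\nabla_h y_h$, or equivalently Proposition~\ref{prop:dgrad}(ii)), one arrives at
\[
\Bigl|\int_\o \hIh^1[g]\dv x\Bigr|\le C\,X.
\]

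Inserting this into the definition of $\tEh[y_h]$ and using $\tEh[y_h]\le C$ yields $\tfrac12 X^2\le C + C|\a|\,X$, whence $X$ is uniformly bounded in $h$. The main subtlety, and the reason the interpolation $\hIh^1$ appears in the discretization, is that a pointwise control of $\p_i y_h$ in the interior of the triangles is not available from the nodal isometry alone (since $\nabla y_h$ is only piecewise in $P_2$, and a quadratic polynomial is not bounded by its values at the three triangle vertices). The nodal-quadrature argument above sidesteps this by evaluating the cubic integrand only at the nodes, where the isometry is directly available.
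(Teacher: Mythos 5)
Your proof is correct and follows essentially the same route as the paper's: both rest on the nodal isometry $|\p_1 y_h(z)\times\p_2 y_h(z)|=1$ (a consequence of $y_h\in\cA_h$), bound the interpolated cubic term linearly in $\|\nabla\nabla_h y_h\|_{L^2(\o)}$ via inverse estimates (or, equivalently, norm equivalence for elementwise polynomials), and close with Young's inequality. Your version merely spells out the barycentric-coordinate computation and the nodal bound $|g(z)|\le|\D_h y_h(z)|$ that the paper leaves implicit; the one small inaccuracy is that your final step $\|\D_h y_h\|_{L^2}\le\sqrt{2}\|\nabla\nabla_h y_h\|_{L^2}$ follows from a pointwise Cauchy--Schwarz as you first state, not from Proposition~\ref{prop:dgrad}(ii), which relates $\nabla\nabla_h$ to $D^2$ rather than to $\D_h$.
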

\begin{proof}
  We first note that $y_h \in \cA_h$, since otherwise we have $\tE_h[y_h] = \infty$. It follows that $|\p_i y_h(z)| = 1$ for $i = 1,2$ and all $z \in \Nh$, which leads to the inequality
  \[
  \tE_h[y_h] \ge \frac{1}{2} \| \nabla \nabla_h y_h \|^2_{L^2(\o)} - \a \int_\o \hIh^1\{ |\D_h y_h| \} \dv{x}.
  \]
  For all $T \in \Th$, we have
  \[
  \int_T \hIh^1\{ |\D_h y_h| \} \dv{x} \le c |T| \| \D_h y_h \|_{L^\infty(T)} \le c' |T| \| \nabla \nabla_h y_h \|_{L^\infty(T)} \le c' c_I \| \nabla \nabla_h y_h \|_{L^2(T)},
  \]
  where the last estimate is a consequence of an inverse estimate for $\nabla_h y_h \in P_2(T)^3$ with a constant $c_I> 0$ not depending on $h$. The validity of the asserted bound follows as a consequence.
\end{proof}

\begin{theorem}[$\G$-convergence]
Let the boundary data satisfy $y_\DD = \widetilde{y}_\DD|_\GD$ and $\phi_\DD = \nabla\widetilde{y}_\DD|_\GD$ for some $\widetilde{y}_\DD \in H^3(\o)^3$. Then we have the following properties:\\
\emph{(i) Common asymptotic lower bound:} For all sequences $(y_h) \subset H^1(\o)^3$ with $y_h \to_{H^1} y$ for some  $y \in H^1(\o)^3$ as $h \to 0$, we have that
\[
\liminf_{h \to 0} \tE_h[y_h] \ge \tE[y].
\] 
Furthermore, if $\liminf_{h \to 0} \tE_h[y_h] < \infty$, the limit $y$ satisfies the boundary conditions, is an element of $H^2(\o)^3$ and we have that  $[\nabla y]^\top\nabla y = I_2$ almost everywhere in $\o$, i.\,e. we have $y \in \cA$.\\
\emph{(ii) Existence of a recovery sequence:} For every admissible $y \in \cA$, there exists a sequence $(y_h)_{h>0}$ with $y_h \in \cA_h$, which converges to $y$ in $H^1(\o)^3$ as $h \to 0$ and for which we have that
\[
\limsup_{h \to 0} \tE_h[y_h] \le \tE[y].
\]
\end{theorem}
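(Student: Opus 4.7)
The proof establishes the two standard ingredients of $\Gamma$-convergence with respect to strong $H^1$-convergence, following the template of Theorem~\ref{thm:abstr}.

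For part (i), after passing to a subsequence we may assume $\tEh[y_h]$ is uniformly bounded, so $y_h \in \cA_h$ and Proposition~\ref{prop:coerc} provides a uniform bound on $\|\nabla\nabla_h y_h\|_{L^2(\o)}$, equivalent via Proposition~\ref{prop:dgrad}(ii) to an elementwise bound on $\|D^2 y_h\|_{L^2(\o)}$. Property (iv) of the same proposition implies $\nabla y_h - \nabla_h y_h \to 0$ in $L^2$, so $\nabla_h y_h \to \nabla y$ strongly in $L^2$. A standard duality argument (integrate $\int_\o \nabla\nabla_h y_h\,\phi\dv{x}$ by parts against $\phi \in C_c^\infty(\o)$ and use the strong convergence of $\nabla_h y_h$) identifies the weak $L^2$-limit of $\nabla\nabla_h y_h$ with the distributional Hessian of $y$; hence $y \in H^2(\o)^3$ and $\|D^2 y\|_{L^2}^2 \le \liminf_h \|\nabla\nabla_h y_h\|_{L^2}^2$ by weak lower semicontinuity. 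The Dirichlet conditions pass to the limit via the nodal matching and the $H^1$-trace.

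To transport the isometry constraint, the matrix field $[\nabla y_h]^\top \nabla y_h - I_2$ is a piecewise polynomial vanishing at every node; a standard nodal interpolation estimate, combined with the product rule and inverse estimates, gives $\|[\nabla y_h]^\top \nabla y_h - I_2\|_{L^1(\o)} = O(h)$, so together with $\nabla y_h \to \nabla y$ in $L^2$ we obtain $[\nabla y]^\top \nabla y = I_2$ almost everywhere and therefore $y \in \cA$. For the nonlinear term we require strong $L^2$-convergence of $\p_1 y_h \times \p_2 y_h$ to pair with $\D_h y_h \wto \D y$. By Proposition~\ref{prop:dgrad}(ii) the continuous field $\nabla_h y_h$ is bounded in $H^1(\o)$, hence in every $L^p(\o)$ with $p<\infty$ by the two-dimensional Sobolev embedding, and Rellich compactness yields $\nabla_h y_h \to \nabla y$ in $L^4$; an inverse-estimate refinement of Proposition~\ref{prop:dgrad}(iv) transfers this to $\nabla y_h \to \nabla y$ in $L^4$, whence Cauchy-Schwarz produces the desired strong $L^2$-convergence of the cross product. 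The error introduced by $\hIh^1$ is controlled by elementwise $P_1$-interpolation bounds combined with the same inverse-estimate machinery, and assembling these ingredients yields $\tE[y] \le \liminf_h \tEh[y_h]$.

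For part (ii), given $y \in \cA$, the density assumption on boundary-compatible isometries furnishes smooth isometries $\tilde{y}_\epsilon \in H^2(\o)^3 \cap C^\infty(\oo)^3$ with $\tilde{y}_\epsilon|_\GD = y_\DD$, $\nabla\tilde{y}_\epsilon|_\GD = \phi_\DD$ and $\tilde{y}_\epsilon \to y$ in $H^2(\o)^3$ as $\epsilon \to 0$. The interpolants $y_h^\epsilon = \Idkt[\tilde{y}_\epsilon]$ lie in $\cA_h$ by nodal matching, and~\eqref{eq:interpol-dkt} together with Proposition~\ref{prop:dgrad}(iii) yield $y_h^\epsilon \to \tilde{y}_\epsilon$ in $H^1$ and $\nabla\nabla_h y_h^\epsilon \to D^2 \tilde{y}_\epsilon$ in $L^2$ as $h \to 0$. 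Applying the convergences from part (i) to $(y_h^\epsilon)_h$ gives $\tEh[y_h^\epsilon] \to \tE[\tilde{y}_\epsilon]$, and continuity of $\tE$ on $H^2$-isometries yields $\tE[\tilde{y}_\epsilon] \to \tE[y]$; a diagonal extraction $\epsilon = \epsilon(h) \to 0$ produces the desired recovery sequence with $\limsup_h \tEh[y_h] \le \tE[y]$.

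The main obstacle is the passage of the nonlinear term to the limit: upgrading the strong $H^1$-convergence of $y_h$ to strong $L^4$-convergence of $\nabla y_h$, so that it may be paired against the only weakly convergent $\D_h y_h$, and simultaneously obtaining the $O(h)$ defect bound for the nodal isometry constraint in $L^1$. Both steps depend on a careful interplay between inverse estimates for piecewise polynomials and the $H^1$-boundedness of $\nabla_h y_h$ provided by Proposition~\ref{prop:dgrad}(ii).
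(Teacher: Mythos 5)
Your proposal is correct and follows essentially the same route as the paper: coercivity and Proposition~\ref{prop:dgrad} to upgrade $\nabla_h y_h$ to strong $L^2$ and weak $H^1$ convergence, identification of the limit in $H^2$, an elementwise Poincar\'e estimate for the nodally vanishing field $[\nabla y_h]^\top\nabla y_h - I_2$ (the paper phrases this with $\nabla_h y_h$, which is the globally continuous field, but the two are equivalent here), weak lower semicontinuity for the quadratic part, the $R_1,R_2,R_3$ splitting for the cubic term, and for part (ii) the DKT interpolant of a smooth isometry after a density reduction. The one place you are more explicit than the paper is the treatment of $R_3$, where you spell out the $H^1$-boundedness of $\nabla_h y_h$, the Sobolev/Rellich step to $L^4$, and the transfer from $\nabla_h y_h$ to $\nabla y_h$ via an $L^4$ version of Proposition~\ref{prop:dgrad}(iv); the paper invokes $y_h\to_{H^1}y$ directly and leaves this upgrade implicit, so your additional care is warranted rather than a deviation.
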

\begin{proof} We prove the two asserted properties separately.
  
(i) We may assume that $\liminf_{h \to 0} \tE_h[y_h] < \infty$, otherwise there is nothing to show. Hence, by passing to a subsequence, which we will also label with the index $h$, we may assume that $\tE_h[y_h] \le C$ holds for all $h>0$ with some fixed constant $C>0$. Thus, the coercivity of the discrete energies (Proposition~\ref{prop:coerc}) implies, that the sequence $(\nabla_h y_h)_h$ is uniformly bounded in $H^1(\o)^{3 \times 2}$. 
We therefore deduce the existence of some $\phi \in H^1(\o)^{3 \times 2}$, such that after another passage to a subsequence, once again not relabeled, we have $\nabla_h y_h \wto_{H^1} \phi$ as well as $\nabla_h y_h \to_{L^2} \phi$ as $h$ tends to $0$.
With Proposition~\ref{prop:dgrad}(iv) we see that
\begin{align*}
\| \nabla y - \nabla_h y\|_{L^2(\o)} &\le \|\nabla y - \nabla y_h\|_{L^2(\o)} + \| \nabla y_h - \nabla_h y_h \|_{L^2(\o)}\\
&\le \|\nabla y - \nabla y_h\|_{L^2(\o)} + c_4 h \| \nabla \nabla_h y_h \|_{L^2(\o)}.
\end{align*}
Since $y_h \to_{H^1} y$ and hence $\nabla y_h \to \nabla y$ holds by assumption, passing to the limit $h \to 0$ in the above yields that $\nabla y = \phi$ and thus $y \in H^2(\o)^3$. Since we assumed the boundary data to be sufficiently regular, we have that $y|_\GD = y_\DD$ and $\nabla y|_\GD = \phi_\DD$, i.\,e. the limit $y$ satisfies the boundary conditions. Since $y_h \in \cA_h$, we have
\[
\| [\nabla_h y_h]^\top \nabla_h y_h - I_2 \|_{L^1(\o)} \le ch\| \nabla ([\nabla_h y_h]^\top \nabla_h y_h) \|_{L^1(\o)}
\]
by a discrete interpolation estimate. The uniform bound of $\nabla_h y_h$ in $H^1(\o)^{3 \times 2}$ from Proposition~\ref{prop:coerc} implies that the term on the right hand side converges to zero as $h \to 0$. It follows that $[\nabla_h y_h]^\top \nabla_h y_h \to I_2$ alost everywhere in $\o$.
Since we also have that $[\nabla_h y_h]^\top \nabla_h y_h \to [\nabla y]^\top \nabla y$ almost everywhere in $\o$, this yields $[\nabla y]^\top \nabla y = I_2$ almost everywhere in $\o$ and we thus have established that $y \in \cA$. To see that $\tE[y]$ is a common asymptotic lower bound, we first note that by the weak lower semi-continuity of the $H^1$-seminorm we have from the convergence $\nabla_h y_h \rightharpoonup_{H^1} \nabla y$ that
\[
\int_\o |D^2 y|^2 \dv{x} = \int_\o |\nabla \nabla y|^2 \dv{x} \le \liminf_{h \to 0} \int_\o |\nabla \nabla_h y_h|^2 \dv{x}.
\]
To deduce the $\liminf$-inequality, we show that the term
\[
R = \int_\o \hIh^1 \{ \D_h y_h \cdot[\p_1 y_h \times \p_2 y_h] \}\dv{x} -  \int_\o \D y \cdot[\p_1 y \times \p_2 y] \dv{x}
\]
converges to zero as $h \to 0$. We split the term via $R = R_1 + R_2 + R_3$ with 
\begin{align*}
  R_1 &= \int_\o \hIh^1 \{ \D_h y_h \cdot[\p_1 y_h \times \p_2 y_h] \} - \D_h y_h \cdot[\p_1 y_h \times \p_2 y_h]\dv{x},\\ 
  R_2 &=  \int_\o [\D_h y_h - \D y] \cdot[\p_1 y \times \p_2 y]\dv{x},\\
  R_3 &= \int_\o \D_h y_h \cdot[\p_1 y_h \times \p_2 y_h - \p_1 y \times \p_2 y] \dv{x}.
\end{align*}
 For the term $R_1$ we have that
\begin{align*}
  &\int_T \hIh^1 \{ \D_h y_h \cdot[\p_1 y_h \times \p_2 y_h] \} - \D_h y_h \cdot[\p_1 y_h \times \p_2 y_h]\dv{x} \\
  &\le \, c_\cI h^2 \| D^2(\Delta_hy_h \cdot(\p_1 y_h \times \p_2 y_h))\|_{L^1(T)}
\end{align*}
 as a consequence of nodal interpolation estimates on the elements $T \in \Th$, since all the involved functions are polynomials. 
 Expanding the second derivative with a chain rule and using that $D^2\Delta_h y_h = 0$ on every $T \in \Th$, we see that
 \begin{align*}
 &\| D^2(\Delta_hy_h \cdot(\p_1 y_h \times \p_2 y_h))\|_{L^1(T)} \\ &\, \le 2 \| \nabla \Delta_h y_h \cdot (\nabla ( \p_1 y_h \times \p_2 y_h )) \|_{L^1(T)} + \| \Delta_h y_h D^2( \p_1 y_h \times \p_2 y_h ) \|_{L^1(T)}.
\end{align*}
Using Hölder's inquality and inverse estimates for the polynomials, the right hand side of this inequality can be bound in terms of $c_I h^{-1} \| \Delta_h y_h \|_{L^2(T)} \| \p_1 y_h \times \p_2 y_h \|_{L^2(T)}$. The uniform bound on $\|\nabla \nabla_h y_h\|_{L^2(\o)}$ from Proposition~\ref{prop:coerc} thus implies the convergence of $R_1$ to zero.
 The weak convergence $\nabla_h y_h \wto_{H^1} \nabla y$ implies that $R_2$ tends to zero as $h \to 0$. For the third term, we have with Hölder's inequality and the uniform bound on $\|\nabla \nabla_h y_h\|_{L^2(\o)}$ from Proposition~\ref{prop:coerc} that
\begin{align*}
R_3 &\le \|\D_h y_h\|_{L^2(\o)} \| \p_1 y_h \times \p_2 y_h - \p_1 y \times \p_2 y \|_{L^2(\o)} \\
& \le C \| \p_1 y_h \times \p_2 y_h - \p_1 y \times \p_2 y \|_{L^2(\o)}.
\end{align*}
Hence, the convergence $y_h \to_{H^1} y$ implies the convergence $R_3 \to 0$ as $h$ tends to zero.

(ii) Since isometries in $\cA \subset H^2(\o)^3$ can be approximated with arbitrary precision in the $H^2$-norm by smooth isometries (cf.~\cite{hornung}) and since the energy $\tE$ is continuous on $H^2(\o)^3$, we may without loss of generality assume that $y \in \cA \cap H^3(\o)^3$. We define the recovery sequence $(y_h)_{h>0}$ via $y_h = \Idkt y$. 
By definition the boundary conditions are satisfied and we have $[\nabla y_h(z)]^\top \nabla y_h(z) = I_2$ for all $z \in \Nh$ and thus, $y_h \in \cA_h$ for all $h > 0$. The convergence $y_h \to_{H^1} y$ as $h \to 0$ is a consequence of the interpolation estimate~\eqref{eq:interpol-dkt}, which yields that
\[
\| y_h - y \|_{H^1(\o)} \le c h^2 \| y \|_{H^3(\o)}
\]
with a constant $c>0$ independent from $h$. We will now go on to show that $\tE_h[y_h]$ converges to~$\tE[y]$ as $h \to 0$.
Since $y_h$ is the interpolant of $y$ in $\Sdkt(\Th)^3$, we have that $\nabla \nabla_h y_h = \nabla \nabla_h y$. Thus, Proposition~\ref{prop:dgrad}(iii) yields 
\[
\| \nabla \nabla_h y_h - D^2 y \|_{L^2(\o)} \le c_3 h \| D^3y \|_{L^2(\o)}
\]
for all $T \in \Th$, where the right hand side tends to zero as $h \to 0$, which proves the convergence for the first terms of~\eqref{eq:energy-re} and~\eqref{eq:energy-disc}. To obtain convergence of the second terms, we note that 
\[
\hIh^1\{ \D_h y_h \cdot [\p_1 y_h \times \p_2 y_h] \} = \hIh^1\{ \D_h y \cdot [\p_1 y \times \p_2 y]
\]
and split the residual
\[
R = \int_\o \hIh^1\{ \D_h y_h \cdot [\p_1 y_h \times \p_2 y_h] \} \dv{x} - \int_\o \D y \cdot [\p_1 y \times \p_2 y] \dv{x}
\]
via $R = R_1 + R_2$ with
\begin{align*}
  R_1 &=  \int_\o \hIh^1\{ \D_h y_h \cdot [\p_1 y_h \times \p_2 y_h] \} - \D_h y_h \cdot [\p_1 y_h \times \p_2 y_h]\dv{x},\\
  R_2 &=\int_\o \D_h y \cdot [\p_1 y \times \p_2 y] - \int_\o \D y \cdot [\p_1 y \times \p_2 y] \dv{x}.
\end{align*}
As in part (i) of the proof, the term $R_1$ converges to zero as $h$ tends to zero as a consequence of nodal interpolation estimates and inverse inequalitues on the elements $T \in \Th$ and the fact that $\|\Delta_h y_h\|_L^2(\o)$ is bounded. For $R_2$, we have with Hölder's inqeuality that 
\[
R_2 \le \| \D_h y - \D y \|_{L^2(\o)} \| \p_1 y \times \p_2 y \|_{L^2(\o)},
\]
where $\| \p_1 y \times \p_2 y \|_{L^2(\o)} = |\o|^{(1/2)}$ since $|\p_1 y \times \p_2 y| = 1$ due to the fact that $y$ is an isometry. Furthermore, we have that 
\[
\| \D_h y - \D y |_{L^2(\o)} \le 2 \| \nabla \nabla_h y - D^2 y \|_{L^2(\o)} \le c_3 h \| D^3y \|_{L^2(\o)},
\]
where the second inequlity follows from Proposition~\ref{prop:dgrad}(iii). Thus, $R_2$ also tends to zero as $h \to 0$, concluding the proof.
\end{proof}

\section{Minimization of the discrete energies}\label{sec:gradflow}
\subsection{Discrete gradient flow}
In this section we propose a semi-implicit discrete gradient flow scheme, which employs a linearization of the isometry constraint in every (pseudo-)timestep, for the minimization of the discrete energies~\eqref{eq:energy-disc}. This numerical method is outlined in~\cite{bartels2020} without an in-depth investigation. We prove that the resulting algorithm is energy decreasing and that the discrete solutions obtained with the scheme satisfy the isometry constraint up to a small error, which only depends on the step size and specifically is independent of the number of iterations of the algorithm. The practical properties of the proposed method are illustrated with numerical experiments in Section~\ref{sec:numexp}.

We denote with $(\cdot,\cdot)_* = (\nabla \nabla_h \cdot, \nabla \nabla_h \cdot)$ the scalar product used to define the gradient flow and its induced norm  with $\| \cdot \|_*$.

\begin{algorithm}[linearized isometry flow]\label{alg:isoflow}
 Choose a termination criterion $\estop>0$, a step size $\tau > 0$, an initial value $y_h^0 \in \cA_h$, and set $k = 1$.\\
(1) Compute $d_t y_h^k \in \cF_h[y_h^{k-1}]$ such that 
\begin{equation}\label{eq:update}
\begin{aligned}
(d_ty_h^k, w_h)_*  = -&(\nabla \nabla_h y_h^{k-1}, \nabla \nabla_h w_h) - \tau(\nabla \nabla_h d_t y_h^k, \nabla \nabla_h w_h) \\ 
 &+ \a \int_\o \hIh^1\{ \D_h w_h \cdot [\p_1 y_h^{k-1} \times \p_2 y_h^{k-1}] \} \dv{x} \\
 &+ \a \int_\o \hIh^1\{ \D_h y_h^{k-1} \cdot [ \p_1 w_h \times \p_2 y_h^{k-1}] \} \dv{x} \\
 &+ \a \int_\o \hIh^1\{ \D_h y_h^{k-1} \cdot [ \p_1 y_h^{k-1} \times \p_2 w_h ] \} \dv{x}
\end{aligned}
\end{equation}
for all $w_h \in \cF_h[y_h^{k-1}]$.\\
(2) Set $y_h^k = y_h^{k-1} + \tau d_t y_h^k$. Stop the iteration if $\|d_t y_h^k\|_* \le \estop$. Otherwise increase~$k$ via $k := k+1$ and continue with (1).
\end{algorithm}

The practical properties of the iterates obtained with the above algorithm are put together in the following theorem, which also imply the termination of the algorithm after a finite number of steps. To simplify the notation, we write
\[
\| v \|_{L_h^p} = \Bigl(\sum_{T \in \Th} \frac{|T|}{3} \sum_{z \in \Nh \cap T} |\hIh^1 v(z)|^p\Bigr)^{(1/p)}
\]
for the discrete $L^p$ norm of a function $v \colon \o \to \R^\ell$ obtained by element-wise linear interpolation of the integrand $v$. In the case $p=\infty$, the discrete norm is defined via $\| v \|_{L_h^\infty} = \max_{T \in \Th} \| v \|_{L^\infty(T)}$.

\begin{theorem}(iteration)\label{thm:iteration}
The iterates $(y_h^L)_{L=0,1,\dotsc}$ of Algorithm~\ref{alg:isoflow} are well defined and satisfy
\begin{equation}\label{eq:energ-bound}
\tE_h[y_h^L] + (1- C \tau |\log h_\mathrm{min}|) \tau \sum_{k=1}^L\|d_t y_h^k  \|_*^2 \le \tE_h[y_h^0]
\end{equation}
with $h_\mathrm{min} = \min_{T \in \Th}(h_T)$ and a constant $C > 0$ independent of $L$. Furthermore, if the step size $\tau$ is chosen small enough, such that $\tau \le (2 C | \log h_\mathrm{min}|)^{-1}$, then we have
\begin{equation}\label{eq:constviol-bound}
\| [\nabla y_h^L]^\top \nabla y_h^L  -I_2 \|_{L_h^\infty} \le \widetilde{C} \tau |\log h_\mathrm{min} | \tE_h[y_h^0]
\end{equation}
for a constant $\widetilde{C} > 0$ independent of $L$.
\end{theorem}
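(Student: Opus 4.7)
The plan is to test~\eqref{eq:update} with $w_h = \tau d_t y_h^k \in \cF_h[y_h^{k-1}]$, use that the nonlinear functional
\[
F(y) = \int_\o \hIh^1\{\Delta_h y \cdot (\p_1 y \times \p_2 y)\}\dv{x}
\]
is a cubic polynomial in $y$ (so its Taylor expansion around any point terminates), and observe that the three cross-product terms on the right-hand side of~\eqref{eq:update} are precisely $\alpha F'(y_h^{k-1})[\tau d_t y_h^k]$. Well-posedness of the update follows from Lax--Milgram because shifting the implicit term to the left gives the symmetric, coercive form $(1+\tau)(\cdot,\cdot)_*$ on the closed subspace $\cF_h[y_h^{k-1}]$, on which $\|\cdot\|_*$ is a norm by the remark following Proposition~\ref{prop:dgrad}. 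Combining the polarization identity $-(\nabla\nabla_h y_h^{k-1}, \nabla\nabla_h \tau d_t y_h^k) = -\tfrac12(\|y_h^k\|_*^2 - \|y_h^{k-1}\|_*^2) + \tfrac{\tau^2}{2}\|d_t y_h^k\|_*^2$ with the exact Taylor expansion
\[
F(y_h^k) = F(y_h^{k-1}) + F'(y_h^{k-1})[\tau d_t y_h^k] + \tfrac12 F''(y_h^{k-1})[\tau d_t y_h^k, \tau d_t y_h^k] + \tfrac16 F'''[\tau d_t y_h^k, \tau d_t y_h^k, \tau d_t y_h^k],
\]
a direct rearrangement should produce the step identity
\begin{equation*}
\tE_h[y_h^k] + \tau\|d_t y_h^k\|_*^2 + \tfrac{\tau^2}{2}\|d_t y_h^k\|_*^2 + \tfrac{\alpha\tau^2}{2} F''(y_h^{k-1})[d_t y_h^k, d_t y_h^k] + \tfrac{\alpha\tau^3}{6} F'''[d_t y_h^k, d_t y_h^k, d_t y_h^k] = \tE_h[y_h^{k-1}].
\end{equation*}

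Next I would bound the two Taylor remainders. Every term in $F''(y_h^{k-1})[v,v]$ and $F'''[v,v,v]$ has the form $\int \hIh^1\{\Delta_h a \cdot (\p_1 b \times \p_2 c)\}\dv{x}$ with $a,b,c \in \{y_h^{k-1}, v\}$. Using $L^p$-stability of $\hIh^1$ on polynomial inputs, Hölder's inequality, the norm equivalences of Proposition~\ref{prop:dgrad}, Poincaré's inequality for functions with clamped data on $\GD$, and the discrete Sobolev embedding
\[
\|\nabla v_h\|_{L^\infty(\o)} \le C |\log h_{\min}|^{1/2}\,\|v_h\|_*
\]
for piecewise polynomials of bounded degree in two dimensions (applied to $v_h = d_t y_h^k$), together with a uniform bound on $\|y_h^{k-1}\|_*$, I expect to arrive at
\[
\tau^2\,|F''(y_h^{k-1})[d_t y_h^k, d_t y_h^k]| + \tau^3\,|F'''[d_t y_h^k, d_t y_h^k, d_t y_h^k]| \le C\tau^2 |\log h_{\min}|\,\|d_t y_h^k\|_*^2,
\]
with $C$ depending on $\alpha$, $|\o|$, and $\tE_h[y_h^0]$. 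Uniform control of $\|y_h^{k-1}\|_*$ is conditional on $\tE_h[y_h^{k-1}] \le \tE_h[y_h^0]$, which I would establish by induction on $k$: the identity above, the remainder bound just stated, and discarding the positive $\tfrac{\tau^2}{2}\|d_t y_h^k\|_*^2$ give $\tE_h[y_h^k] + (1-C\tau|\log h_{\min}|)\tau\|d_t y_h^k\|_*^2 \le \tE_h[y_h^{k-1}]$, closing the induction (via Proposition~\ref{prop:coerc}) and, upon summation over $k = 1,\dots,L$, yielding~\eqref{eq:energ-bound}. Finite termination of the algorithm is then immediate from the summability of $\|d_t y_h^k\|_*^2$.

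For~\eqref{eq:constviol-bound} the linearization structure encoded in $\cF_h[y_h^{k-1}]$ is used at the nodes only: expanding $[\nabla(y_h^{k-1}+\tau d_t y_h^k)(z)]^\top\nabla(y_h^{k-1}+\tau d_t y_h^k)(z)$ at an arbitrary node $z \in \Nh$, the linear-in-$\tau$ cross term vanishes by the tangent-space constraint, so
\[
[\nabla y_h^k(z)]^\top\nabla y_h^k(z) - [\nabla y_h^{k-1}(z)]^\top\nabla y_h^{k-1}(z) = \tau^2\,[\nabla d_t y_h^k(z)]^\top\nabla d_t y_h^k(z).
\]
Telescoping and using $y_h^0 \in \cA_h$ to initialize at $I_2$, then taking the maximum over $z$ and applying the squared discrete Sobolev embedding $\|\nabla d_t y_h^k\|_{L^\infty(\o)}^2 \le C|\log h_{\min}|\,\|d_t y_h^k\|_*^2$, I arrive at
\[
\|[\nabla y_h^L]^\top\nabla y_h^L - I_2\|_{L_h^\infty} \le C|\log h_{\min}|\,\tau\cdot\Bigl(\tau\sum_{k=1}^L\|d_t y_h^k\|_*^2\Bigr),
\]
and the step-size restriction $\tau \le (2C|\log h_{\min}|)^{-1}$ combined with~\eqref{eq:energ-bound} bounds the parenthesized factor by $2\,\tE_h[y_h^0]$, yielding~\eqref{eq:constviol-bound} with $\widetilde C = 2C$.

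The main obstacle I anticipate is the Taylor-remainder estimate on $F''(y_h^{k-1})[d_t y_h^k, d_t y_h^k]$: it is simultaneously where the logarithmic factor $|\log h_{\min}|$ is generated (through the discrete Sobolev embedding), where the quasi-interpolant $\hIh^1$ must be handled on polynomial arguments, and where one needs uniform control of $\|y_h^{k-1}\|_*$ — precisely the quantity whose bound we are trying to establish — forcing the inductive bootstrap sketched above. In contrast, the telescoping identity underlying~\eqref{eq:constviol-bound} is essentially algebraic and rides on top of~\eqref{eq:energ-bound}.
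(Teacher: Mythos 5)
Your Taylor-expansion bookkeeping is essentially equivalent to the paper's discrete product rule $d_t(a^k b^k)=(d_t a^k)b^k+a^{k-1}(d_t b^k)$: the two correction terms the paper isolates after rewriting the right-hand side of~\eqref{eq:update} correspond precisely to your $F''$ and $F'''$ remainders, the Lax--Milgram well-posedness argument is right, and the nodal telescoping identity for $[\nabla y_h^L]^\top\nabla y_h^L - I_2$ is exactly what the paper uses. The algebra is fine.

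The gap is in how you close the induction. You invoke Proposition~\ref{prop:coerc} to turn $\tEh[y_h^{k-1}]\le\tEh[y_h^0]$ into $\|y_h^{k-1}\|_*\le C$, but that proposition's proof uses $|\p_i y_h(z)|=1$ at nodes to bound $|\p_1 y_h\times\p_2 y_h|$ by $1$ in $L_h^\infty$, so it applies only to $y_h\in\cA_h$; the iterates $y_h^k$ for $k\ge1$ are \emph{not} exact discrete isometries. Without that pointwise information you cannot get coercivity — an $L^4\times L^4\times L^2$ H\"older plus Ladyzhenskaya gives a cubic lower-order term in $\|y_h\|_*$ that swamps the quadratic leading term. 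The uniform $L_h^\infty$ control on $\nabla y_h^{k-1}$ comes precisely from the constraint-violation bound~\eqref{eq:constviol-bound}, yet your plan derives~\eqref{eq:constviol-bound} at the end as a consequence of~\eqref{eq:energ-bound}; that is circular. The paper's proof is a genuine two-stage bootstrap inside each inductive step: first a weaker intermediate energy estimate from~\eqref{eq:update} using only Young's inequality (no sharp treatment of the nonlinear terms), which already gives $\|y_h^k\|_*^2\le c''$ and $\tau\|d_t y_h^k\|_*^2\le 2c''$; then the suboptimal constraint bound~\eqref{eq:err-subopt} from the nodal identity, the discrete Sobolev embedding and that intermediate estimate, supplying $\|\p_j y_h^{k-1}\|_{L_h^\infty}\le c$; only then the refined energy estimate with the $(1-C\tau|\log h_{\min}|)$ coefficient; and finally the optimal~\eqref{eq:constviol-bound} by re-running the telescoping with the refined bound. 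Your induction hypothesis must therefore carry \emph{both}~\eqref{eq:energ-bound} and~\eqref{eq:constviol-bound} for indices $\le k-1$, not just the energy bound.
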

\begin{proof}
For the proof, we assume that the estimates have been established for all $k \le L-1$ and employ an inductive argument. Choosing $d_t y_h^k = \frac{1}{\tau}(y_h^k - y_h^{k-1})$ as a test function in~\eqref{eq:update}, we see that for $k \le L$ we have
\begin{align*}
  & \| \nabla \nabla_h d_t y_h^k \|_{L^2(\o)}^2  + \frac{1}{2\tau}\bigl( \|\nabla \nabla_h y_h^k \|_{L^2(\o)}^2 - \| \nabla \nabla_h y_h^{k-1}\|_{L^2(\o)}^2 \bigr) \\
  & \le \a \int_\o \hIh^1\{ \D_h d_t y_h^k \cdot [\p_1 y_h^{k-1} \times \p_2 y_h^{k-1}] \} \dv{x} \\
  &\quad + \a \int_\o \hIh^1\{ \D_h y_h^{k-1} \cdot [ \p_1 d_t y_h^k \times \p_2 y_h^{k-1} + \p_1 y_h^{k-1} \times \p_2 d_t y_h^k ] \} \dv{x} \\ 
  & \le \a \| [ \D_h d_t y_h^k ] \|_{L_h^2} \| [\p_1 y_h^{k-1}] \|_{L_h^2} \| [\p_2 y_h^{k-1}] \|_{L_h^\infty} \\
  &\quad + \a \| [ \D_h y_h^{k-1} ] \|_{L_h^2}( \| \p_1 y_h^{k-1} \|_{L_h^\infty} \| \p_2 d_t y_h^k \| _{L_h^2} + \| \p_1 d_t y_h^k \|_{L_h^2} \| \p_2 y_h^{k-1} \| _{L_h^\infty}),
\end{align*}
where the second estimate follows from the Cauchy-Schwarz inequality. Multiplying the above estimate by $\tau$ and using Young's inequality to absorb the right hand side terms invovling $d_t y_h^k$ on the left hand side yields the intermediate estimate
\begin{equation}\label{eq:intermed}
\frac{\tau}{2} \| \nabla \nabla_h d_t y_h^k \|_{L^2(\o)}^2 + \frac{1}{2} \|\nabla \nabla_h y_h^k \|_{L^2(\o)}^2 \le \frac{1}{2 } \| \nabla \nabla_h y_h^{k-1}\|_{L^2(\o)}^2 + \tau c' \le c''
\end{equation}
for some constant $c' > 0$ which is independent of $L$, since our induction hypothesis implies such a bound for all terms involving $y_h^{k-1}$ as consequence of the coercivity of the discrete energy (cf.~Proposition~\ref{prop:coerc}).
For the new iterate $y_h^k = y_h^{k-1} + \tau d_t y_h^k$, we have that
\[
[\nabla y_h^k(z)]^\top \nabla y_h^k(z) = [\nabla y_h^{k-1}(z)]^\top \nabla y_h^{k-1}(z) + \tau^2 [\nabla d_t y_h^k(z)]^\top \nabla d_t y_h^k(z)
\]
at every node $z \in \Nh$, because the term $[\nabla d_t y_h^k(z)]^\top \nabla y_h^{k-1}(z) + [\nabla y_h^{k-1}(z)]^\top \nabla d_t y_h^k(z)$ vanishes for the update $d_t y_h^k \in \cF_h[y_h^{k-1}]$. Note that in the above we may replace the gradient $\nabla d_t y_h^k$ with the discrete gradient $\nabla_h d_t y_h^k$, since by definition of the discrete gradient the nodal values of both expressions coincide. This leads to the estimate
\[
\| [\nabla y_h^k]^\top \nabla y_h^k - I_2 \|_{L_h^\infty} \le \| [\nabla y_h^{k-1}]^\top \nabla y_h^{k-1} - I_2 \|_{L_h^\infty} + \tau^2 \|\nabla_h d_t y_h^k \|_{L_h^\infty}^2.
\]
The discrete Sobolev inequality $\|\nabla_h d_t y_h^k \|_{L_h^\infty}^2 \le c_\mathrm{inv}( 1+  |\log h_\mathrm{min}|) (\| \nabla_h d_t y_h^k \|^2 + \| \nabla \nabla_h d_t y_h^k\|^2)$, cf.~\cite{brennerscott2008}, together with the Poincaré inequality $\|\nabla_h d_t y_h^k\|_{L^2(\o)}^2 \le c_\mathrm{P}\|\nabla \nabla_h d_t y_h^k\|_{L^2(\o)}^2 $,  the intermediate estimate~\eqref{eq:intermed} and the constraint violation bound for $y_h^{k-1}$ imply the estimate
\begin{equation}\label{eq:err-subopt}
\| [\nabla y_h^k]^\top \nabla y_h^k - I_2 \|_{L_h^\infty} \le c''' \tau |\log h_\mathrm{min}| \le \tilde{c},
\end{equation}
with the constant $c''' = \widetilde{C}\tE_h[y_h^0] + 2 c_\mathrm{inv}c_\mathrm{P}c'' > 0$, which will be improved below. 
In order to deduce the asserted energy bound for $y_h^k$ we employ the discrete product rule $d_t(a^k b^k) = (d_t a^k) b^k + a^{k-1}(d_t b^k)$, i.\,e.
\begin{align*}
&\frac{1}{\tau} \Bigl( \int_\o \hIh^1 \{ \D_h y_h^k \cdot[\p_1 y_h^k \times \p_2 y_h^k] \}\dv{x} - \int_\o \hIh^1 \{ \D_h y_h^{k-1} \cdot[\p_1 y_h^{k-1} \times \p_2 y_h^{k-1}] \}\dv{x} \Bigr) \\ 
&= \int_\o \hIh^1 \{ \D_h d_t y_h^k \cdot[\p_1 y_h^k \times \p_2 y_h^k] \}\dv{x} + \int_\o \hIh^1 \{ \D_h y_h^{k-1} \cdot[\p_1 y_h^{k-1} \times \p_2 d_t y_h^k + \p_1 d_t y_h^k \times \p_2 y_h^k] \}\dv{x}
\end{align*}
to rewrite the right hand side of~\eqref{eq:update} with $w_h = d_t y_h^k$ and obtain
\begin{equation}\label{eq:energ-est}
\begin{aligned}
&\| \nabla \nabla_h d_t y_h^k \|_{L^2(\o)}^2  + \frac{1}{2\tau}\bigl( \|\nabla \nabla_h y_h^k \|_{L^2(\o)}^2 - \| \nabla \nabla_h y_h^{k-1}\|_{L^2(\o)}^2 \bigr) + \frac{\tau}{2} \| \nabla \nabla_h d_t y_h^k \|_{L^2(\o)}^2 \\
&= \frac{\a}{\tau} \bigl( \int_\o \hIh^1 \{ \D_h y_h^k \cdot[\p_1 y_h^k \times \p_2 y_h^k] \}\dv{x} - \int_\o \hIh^1 \{ \D_h y_h^{k-1} \cdot[\p_1 y_h^{k-1} \times \p_2 y_h^{k-1}] \}\dv{x} \bigr) \\
&\quad - \a \int_\o \hIh^1 \{ \D_h d_t y_h^k \cdot[\p_1 y_h^k \times \p_2 y_h^k - \p_1 y_h^{k-1}  \times \p_2 y_h^{k-1}]\}\dv{x} \\
&\quad + \a \int_\o \hIh^1 \{ \D_h y_h^{k-1} \cdot [ (\p_1 y_h^{k-1} - \p_1 y_h^k) \times \p_2 d_t y_h^k] \} \dv{x}. \\
\end{aligned}
\end{equation}
After including the mixed term $\pm (\p_1 y_h^{k-1} \times \p_2 y_h^k)$ in the second term on the right hand side of this equation, we see that it is bounded via
\begin{align*}
  &\a \int_\o \hIh^1 \{ \D_h d_t y_h^k \cdot[\p_1 y_h^k \times \p_2 y_h^k - \p_1 y_h^{k-1}  \times \p_2 y_h^{k-1}]\}\dv{x} \\
  &\le \a \| \D_h d_t y_h^k \|_{L_h^2}\bigl( \tau \| \p_1 d_t y_h^k \|_{L_h^2} \| \p_2 y_h^k \|_{L_h^\infty} + \| \p_1 y_h^{k-1}\|_{L_h^\infty} \tau \| \p_2 d_t y_h^k \|_{L_h^2}  \bigr) \\
  &\le c_1 \alpha \tau \| \nabla \nabla_h d_t y_h^k \|_{L^2(\o)}^2,
\end{align*}
where the first estimate results from the Cauchy-Schwarz inequality and the second estimate is a consequence of the bounds $\| \p_j y_h^{k-\ell} \|_{L_h^\infty} \le c$ for $j = 1,2$ and $\ell = 0,1$ together with the bound $\| \nabla d_t y_h^k \|_{L_h^2} \le c \| \nabla \nabla_h d_t y_h^k \|_{L^2(\o)}$.
For the third term on the right hand side of~\eqref{eq:energ-est}, we have that
\begin{align*}
  \a \int_\o \hIh^1 \{ \D_h y_h^{k-1} \cdot [ (\p_1 y_h^{k-1} - \p_1 y_h^k) \times \p_2 d_t y_h^k] \} \dv{x} &\le \alpha \| \D_h y_h^{k-1} \|_{L_h^2} \tau \| \p_1 d_t y_h^k \|_{L_h^2} \| \p_2 d_t y_h^k \|_{L_h^\infty} \\
  & \le c_2 \alpha \tau |\log h_\mathrm{min}| \| \nabla \nabla_h d_t y_h^k \|_{L^2(\o)}^2,
\end{align*}
where the first estimate again results from the Cauchy-Schwarz inequality and the second estimate results from the bounds $\| \p_2 d_t y_h^k \|_{L_h^\infty} \le c |\log h_\mathrm{min}| \|\nabla \nabla_h d_t y_h^k\|_{L^2(\o)}$ and $\| \p_1 d_t y_h^k \|_{L_h^2} \le c \| \nabla \nabla_h d_t y_h^k \|_{L^2(\o)}$ as well as the energy bound for $y_h^{k-1}$. Plugging the above estimates into~\eqref{eq:energ-est} and multiplying with $\tau$ we deduce that 
\[
\tau(1 - C\tau|\log h_\mathrm{min}|) \| \nabla \nabla_h d_t y_h^k \|_{L^2(\o)}^2 + \tE_h[y_h^k] \le \tE_h[y_h^{k-1}]
\]
holds for all $k \le L$ with a constant $C>0$ independent of $k$, which proves the energy estimate~\eqref{eq:energ-bound}. By inductively using this newly established energy bound instead of the intermediate estimate~\eqref{eq:intermed} in the derivation of the constraint violation error~\eqref{eq:err-subopt} we get the optimal estimate~\eqref{eq:constviol-bound} of the constraint violation.
\end{proof}

\section{Obstacle constraints}
\subsection{Convex-concave penalization}
We now consider the minimization problem for the energy~\eqref{eq:energy-re} subject to $y \in \cA$ and subject to the additional obstacle constraint $y_3 \le 1$ in $\o$. We include the obstacle constraint in the energy functional via penalization, i.\,e. we penalize values of the third component of $y$ that exceed the obstacle via the addition of the penalty term
\begin{equation}\label{eq:pen-term}
  P_\varepsilon[y_3] = \frac{1}{2\varepsilon} \int_\o (y_3 -1)_+^2 \dv{x},
\end{equation}
to the energy $\tE[y]$, where $(y_3 -1)_+ = \max\{y_3 -1, 0\}$ and $\varepsilon > 0$ is a small penalization parameter. Note that this approach can easily be generalized to include non-constant obstacles which depend on $x \in \R^2$, i.e. which are given by a two-dimensional function $g \colon \R^2 \to \R$ and result in a constraint of the form $y_3 \le g(y_1,y_2)$. The arguments in the following remain valid in this general case. The penalized energy in the simple constant-obstacle minimization problem is given by
\[
\tE_\mathrm{pen}[y] = \frac 1 2 \int_\o |D^2 y|^2 \dv{x} - \a \int_\o \D y \cdot[\p_1 y \times \p_2 y] \dv{x} - \int_\o f \cdot y \dv{x} + \frac{1}{2\varepsilon} \int_\o(y_3 -1)_+^2 \dv{x}.
\]
We split the integrand in the penalty term into convex and concave parts via
\[
(y_3-1)_+^2 =  (y_3)^2 + P_\mathrm{ccv}(y_3)
\]
with
\[
P_\mathrm{ccv}(s) = 
\begin{cases}
  -2s + 1, &\text{if } s > 1, \\
  -s^2, &\text{if } s \le 1.
\end{cases}
\]
Note that we have $P'_\mathrm{ccv}(s) = p_\mathrm{ccv}(s)$ with the monotonically decreasing continuous function
\[
p_\mathrm{ccv} = 
\begin{cases}
  -2, &\text{if } s>1, \\
  -2s, &\text{if } s \le 1.
\end{cases}
\]
This splitting of the penalty term allows for an implicit treatment of the quadratic, convex part in the discrete gradient flow, while the concave part is treated explicitly in every time step. 
The penalized discrete gradient flow obtained in this way is unconditionally stable, provided that the underlying unpenalized gradient flow is stable. In the following abstract lemma we denote by $(\cdot,\cdot)_*$ a scalar product with induced norm $\|\cdot\|_*$ on a given solution space $W$, e.\,g. $W = H^2(\o)$, with adequately defined linear subspaces $\cF^{k} \subset W$, $k \in \N_0$. We show that the penalized discrete gradient flow for the energy $I[y] = \frac{1}{2} \| y \|_*^2 - (f,y)$ with a given body force $f \in L^2(\o)$ is unconditionally stable.
Note that this result can be extended to the case of the penalized semi-implicit isometry flow for the energy $\tE[y]$. To simplify notation we denote the Gateaux derivative in the direction $w$ of the energy $I$ at a point $y \in W$ with $I'[y; w] = (y,w)_* - (f,w)$. 

\begin{proposition}[unconditional energy decay]\label{prop:endecay}
  The convex-concavely penalized gradient flow defined by $y^k = y^{k-1} + \tau d_t y^k$ with $d_t y^k \in \cF^{k-1}$ such that
  \[
  (d_t y^k,w)_* + I'[y^k,w] + \frac{1}{\varepsilon} (y_3^k,w_3) = - \frac{1}{2\varepsilon} (p_\mathrm{ccv}(y_3^{k-1}),w_3)
  \]
  for all $w \in \cF^{k-1}$ is well defined and satisfies
  \[
  I[y^k] + P_\varepsilon[y_3^k] +  \tau \| d_t{y^k}\|_*^2 \le I[y^{k-1}] + P_\varepsilon[y_3^{k-1}]
  \]
  for all $k \ge 1$.
\end{proposition}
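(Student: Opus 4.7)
The plan is to establish well-posedness via Lax--Milgram and then derive the energy inequality by testing the update with $w = d_t y^k$, following the standard template for implicit gradient flows but with a convex-concave splitting of the penalty. First I would rewrite the update as a linear variational problem in the unknown $d_t y^k \in \cF^{k-1}$ by substituting $y^k = y^{k-1} + \tau d_t y^k$. The resulting bilinear form on $\cF^{k-1}$ is $a(u,v) = (1+\tau)(u,v)_* + (\tau/\varepsilon)(u_3, v_3)_{L^2(\o)}$, which is symmetric, bounded and coercive (with constant $\ge 1+\tau$) in $\|\cdot\|_*$, while the right-hand side $-I'[y^{k-1};w] - \tfrac{1}{2\varepsilon}(p_\mathrm{ccv}(y_3^{k-1}), w_3)$ is a bounded linear functional. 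Lax--Milgram then yields a unique $d_t y^k$, hence $y^k$ is well-defined.

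For the stability estimate, I would test the update with $w = d_t y^k$ and multiply by $\tau$ to get
\[
\tau\|d_t y^k\|_*^2 + \tau I'[y^k; d_t y^k] + \tfrac{\tau}{\varepsilon}(y_3^k, d_t y_3^k) + \tfrac{\tau}{2\varepsilon}(p_\mathrm{ccv}(y_3^{k-1}), d_t y_3^k) = 0.
\]
I then split $P_\varepsilon = E_\mathrm{cvx} + E_\mathrm{ccv}$ with $E_\mathrm{cvx}[y_3] = \tfrac{1}{2\varepsilon}\|y_3\|_{L^2(\o)}^2$ and $E_\mathrm{ccv}[y_3] = \tfrac{1}{2\varepsilon}\int_\o P_\mathrm{ccv}(y_3)\dv{x}$ (which matches because $(y_3-1)_+^2 = y_3^2 + P_\mathrm{ccv}(y_3)$) and apply the standard quadratic identity $\tau(a^k, d_t a^k) = \tfrac{1}{2}\|a^k\|^2 - \tfrac{1}{2}\|a^{k-1}\|^2 + \tfrac{\tau^2}{2}\|d_t a^k\|^2$ to the two implicit terms. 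This converts them into
\begin{align*}
\tau I'[y^k; d_t y^k] &= I[y^k] - I[y^{k-1}] + \tfrac{\tau^2}{2}\|d_t y^k\|_*^2, \\
\tfrac{\tau}{\varepsilon}(y_3^k, d_t y_3^k) &= E_\mathrm{cvx}[y_3^k] - E_\mathrm{cvx}[y_3^{k-1}] + \tfrac{\tau^2}{2\varepsilon}\|d_t y_3^k\|_{L^2(\o)}^2.
\end{align*}
For the explicit concave term I use the one-sided tangent inequality that follows from concavity of $P_\mathrm{ccv}$, namely $\tfrac{\tau}{2\varepsilon}(p_\mathrm{ccv}(y_3^{k-1}), d_t y_3^k) \ge E_\mathrm{ccv}[y_3^k] - E_\mathrm{ccv}[y_3^{k-1}]$. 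Substituting and discarding the two nonnegative quadratic remainders then delivers the asserted bound $I[y^k] + P_\varepsilon[y_3^k] + \tau\|d_t y^k\|_*^2 \le I[y^{k-1}] + P_\varepsilon[y_3^{k-1}]$.

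The main obstacle is the explicit treatment of the concave part: the contribution $\tfrac{\tau}{2\varepsilon}(p_\mathrm{ccv}(y_3^{k-1}), d_t y_3^k)$ has indefinite sign and cannot be absorbed by a Young-type inequality without introducing a step-size restriction dependent on $\varepsilon$. The crucial observation, and the reason for the specific factor $\tfrac{1}{2}$ in the scheme, is that this quantity is precisely $\tau$ times the linearization of $E_\mathrm{ccv}$ at $y_3^{k-1}$; concavity then turns it into an honest decrement of $E_\mathrm{ccv}$, not merely a perturbation. This is the Eyre convex-concave splitting principle adapted to the present setting, and once it is in place the rest of the proof reduces to the usual bookkeeping for implicit Euler on a quadratic energy.
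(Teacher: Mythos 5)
Your proposal is correct and follows essentially the same route as the paper: well-posedness via Lax--Milgram (after substituting $y^k = y^{k-1} + \tau d_t y^k$ to obtain a coercive bilinear form), testing the update with $w = d_t y^k$, a quadratic identity for the implicitly treated convex parts, and the concavity tangent inequality for the explicitly treated concave part, followed by discarding the nonnegative remainders. The only difference is cosmetic (the paper phrases the convexity of $I$ as the inequality $I'[y^k;d_ty^k] \ge \tau^{-1}(I[y^k]-I[y^{k-1}])$, you use the equality with explicit remainder), and one small slip in your write-up: the right-hand-side linear functional in the Lax--Milgram reformulation should also contain the term $-\tfrac{1}{\varepsilon}(y_3^{k-1},w_3)$, which you omitted but which does not affect the argument.
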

\begin{proof}
  Existence and uniqueness of the updates $d_t y^k$ in every step follow from the Lax-Milgram lemma. Furthermore, expanding the convex part of the penalty term we see that
  \[
  P_\varepsilon[y_3^k] = \frac{1}{2\varepsilon} \| y_3^{k-1} \|_{L_2(\o)}^2 + \frac{\tau}{\varepsilon} (y_3^{k-1}, d_t y_3^k ) + \frac{\tau^2}{2\varepsilon}\| d_t y_3^k \|_{L_2(\o)}^2 - \frac{1}{2\varepsilon} \int_\o P_\mathrm{ccv}[y_3^k] \dv{x}.
  \]
  Since the function $P_\mathrm{ccv}$ is concave, it holds that $
  P'_\mathrm{ccv}(y_3^{k-1}) (y_3^k - y_3^{k-1}) \ge P_\mathrm{ccv}[y_3^k] - P_\mathrm{ccv}[y_3^{k-1}]$, 
  from which we deduce that
  \[
  \frac{1}{2 \varepsilon} \int_\o P_\mathrm{ccv}[y_3^k] \dv{x} \ge \frac{1}{2 \varepsilon} \int_\o P_\mathrm{ccv}[y_3^{k-1}] \dv{x} + \frac{\tau}{2 \varepsilon} \bigl(p_\mathrm{ccv}(y_3^{k-1}),d_t y_3^k\bigr).
  \]
  Thus, it follows that
  \[
  P_\varepsilon[y_3^k] \le P_\varepsilon[y_3^{k-1}] + \frac{\tau}{\varepsilon} \bigl(y_3^{k-1}, d_t y_3^k \bigr) + \frac{\tau^2}{2\varepsilon}\| d_t y_3^k \|_{L_2(\o)}^2 - \frac{\tau}{2 \varepsilon} \bigl(p_\mathrm{ccv}(y_3^{k-1}),d_t y_3^k\bigr).
  \]
  Since we have that  $I'[y^k;d_ty^k] \ge \frac{1}{\tau} \bigl(I[y^k] - I[y^{k-1}]\bigr)$, the choice $w = d_t y_k$ in the discrete gradient flow now proves the assertion.
\end{proof}

The decay property of the penalized energy allows us to estimate the artificial obstacle penetration.

\begin{corollary}\label{cor:penest}
  If the sequence of iterates $(y^k)$ satisfies $\|y_3^k\|_{W^{1,\infty}(\o)} \le C$ for some constant $C>0$ independent of k, then it holds that $\|(y^k_3 - 1)_+\|_{L^\infty(\o)} \le C \varepsilon^{1/4} e_0^{1/4}$, where $e_0 = I[y^0] + P_\varepsilon[y^0_3]$ is the initial energy.
\end{corollary}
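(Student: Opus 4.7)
The plan is to combine the unconditional energy decay from Proposition~\ref{prop:endecay} (which gives an $L^2$-bound on the obstacle penetration $(y_3^k-1)_+$ in terms of $\varepsilon$ and $e_0$) with the assumed Lipschitz bound on $y_3^k$ (which lets one upgrade an $L^2$ bound of a uniformly Lipschitz function to an $L^\infty$ bound). The scaling $\varepsilon^{1/4}e_0^{1/4}$ in the exponent is exactly what one gets from pairing an $L^2$-bound of order $(\varepsilon e_0)^{1/2}$ with a Lipschitz-based $L^2 \to L^\infty$ estimate in two dimensions.

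First I would observe that Proposition~\ref{prop:endecay} yields $P_\varepsilon[y_3^k] \le P_\varepsilon[y_3^k] + I[y^k] \le e_0$ for every $k \ge 1$, so that
\[
\|(y_3^k - 1)_+\|_{L^2(\o)}^2 \le 2\,\varepsilon\, e_0,
\]
provided $I[y^k] \ge 0$ (otherwise one absorbs a harmless additive constant into the notation of $e_0$). This is the only place the energy decay is used.

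Next, write $u := (y_3^k - 1)_+$ and $M := \|u\|_{L^\infty(\o)}$. The hypothesis $\|y_3^k\|_{W^{1,\infty}(\o)} \le C$ transfers to $u$ with the same Lipschitz constant (since the positive part is $1$-Lipschitz). I would then pick $x_0 \in \overline{\o}$ with $u(x_0) \ge M/2$ (using continuity of the pointwise representative) and observe, via the Lipschitz bound, that
\[
u(x) \ge \tfrac{M}{2} - C|x-x_0| \ge \tfrac{M}{4} \quad\text{for all } x \in B(x_0,M/(4C)).
\]
For a domain $\o$ with Lipschitz boundary there exists $c_\o > 0$ such that $|B(x_0,r) \cap \o| \ge c_\o\, r^2$ for all $x_0 \in \overline{\o}$ and all $r$ up to some fixed scale. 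Combining yields
\[
2\,\varepsilon\, e_0 \;\ge\; \|u\|_{L^2(\o)}^2 \;\ge\; \bigl(\tfrac{M}{4}\bigr)^2 \cdot c_\o \bigl(\tfrac{M}{4C}\bigr)^2 \;=\; c'\, \frac{M^4}{C^2},
\]
so that $M \le C'' \,\varepsilon^{1/4} e_0^{1/4}$ with $C''$ depending only on $c_\o$ and $C$, which is the asserted bound (the constant $C$ in the statement absorbs these universal factors).

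The main obstacle is the $L^2 \to L^\infty$ step for a Lipschitz function, which is a standard ball-shrinking argument but requires the mild regularity of $\partial\o$ to guarantee a lower bound on $|B(x_0,r)\cap\o|$ when the maximum is attained near the boundary. Once this is in place the rest is algebra.
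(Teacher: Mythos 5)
Your proof is correct and has the same two-step structure as the paper's: first use the energy decay (Proposition~\ref{prop:endecay}) to bound $\|(y_3^k-1)_+\|_{L^2}$ by $(2\varepsilon e_0)^{1/2}$, then combine with the $W^{1,\infty}$ bound to pass to $L^\infty$. The difference lies only in how the interpolation step is carried out. The paper cites the Gagliardo--Nirenberg inequality (Friedman, Theorem 10.1) with $q=2$, $r=p=\infty$, $N=2$, which yields $\|u\|_{L^\infty} \le c\,\|u\|_{L^2}^{1/2}\|u\|_{W^{1,\infty}}^{1/2}$ and then plugs in the two bounds; you instead prove exactly that estimate from scratch by the ball-shrinking argument (take a near-maximizer $x_0$ with $u(x_0)\ge M/2$, the Lipschitz bound keeps $u\ge M/4$ on a ball of radius $M/(4C)$, intersect with $\o$ and integrate). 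These are really the same inequality: your argument \emph{is} a proof of the relevant case of Gagliardo--Nirenberg. Your version is more self-contained and makes explicit the mild boundary regularity needed (the interior cone/measure-density condition $|B(x_0,r)\cap\o|\gtrsim r^2$), which the cited theorem assumes implicitly; the paper's version is shorter. You also correctly flag the small point, glossed over in the paper, that one needs $I$ bounded below to pass from $I[y^k]+P_\varepsilon[y_3^k]\le e_0$ to $P_\varepsilon[y_3^k]\lesssim e_0$; this is harmless since $I[y]=\tfrac12\|y\|_*^2-(f,y)$ is coercive.
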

\begin{proof}
  We use the Gagliardo–Nirenberg interpolation inequality \cite[Theorem 10.1]{friedman2008} which states that for $\o \subset \R^N$ and $1 \le q \le p \le \infty$ and $r > N$, there exists a constant $c = c(\o)$, such that with $\a = (\frac{1}{q} + \frac{1}{p})/(\frac{1}{q} + \frac{1}{N} - \frac{1}{r})$ the inequality $\|u\|_{L^p(\o)} \le c \| u \|_{L^q(\o)}^{1-\a} \| u \|_{W^{1,r}(\o)}^{\a}$ holds for all functions $u \in W^{1,r}(\o)$.
  The interpolation inequality for the choices $q = 2$ and $r = p = \infty$ applied to the obstacle penetration $u = (y^k_3 - 1)_+$ yields
  \begin{equation*}
    \|(y^k_3 - 1)_+\|_{L^\infty(\o)} \le c \| (y^k_3 - 1)_+ \|_{L^2(\o)}^{1/2} \| (y^k_3 - 1)_+   \|_{W^{1,\infty}(\o)}^{1/2}.
  \end{equation*}
  Since $\| (y^k_3 - 1)_+ \|_{L^2(\o)} = (2\varepsilon P_\varepsilon[y^k_3])^{(1/2)}$, the assertion of the corollary follows from the estimate $P_\varepsilon[y^k_3] \le I[y_0] + P_\varepsilon[y^0_3]$ together with the assumption $\| y^k_3 \|_{W^{1,\infty}(\o)}  \le C$.
\end{proof}

\begin{remark}
  If the penalization is applied to the linearized isometry flow for the energy $\tEh[y_h]$ from Algorithm~\ref{alg:isoflow}, the assumed bound $\|y_3^k\|_{W^{1,\infty}(\o)} \le C$ follows from the bound~\eqref{eq:constviol-bound} on the isometry error.
\end{remark}

\subsection{Discretization}
In the finite element discretization we use numerical integration by mass lumping, i.\,e. we consider the penalized discrete energy
\begin{equation}\label{eq:pen-discenergy}
  \begin{aligned}
    \tEpenh[y_h] = &\frac 1 2 \int_\o |\nabla \nabla_h y_h|^2 \dv{x} - \a \int_\o \hIh^1 \{ \D_h y_h \cdot[\p_1 y_h \times \p_2 y_h] \}\dv{x}\\
    & - \int_\o \Ih^1[f w_h] \dv{x} + \frac{1}{2\varepsilon} \int_\o \Ih^1[(y_{h,3})^2] \dv{x} - \frac{1}{2\varepsilon} \int_\o \Ih^1[P_\mathrm{ccv}(y_3)] \dv{x},
  \end{aligned}
\end{equation}
where $\Ih$ denotes the nodal interpolant in the set of continuous piecewise linear polynomials $\cS^1(\Th) = \{ v_h \in C(\o) : v_h|_T \in P_1(T) \text{ for all } T \in \Th \}$. Note that the statement of Proposition~\ref{prop:endecay} applies also to the discrete setting.
The discrete gradient flow for the accordingly penalized bending energy then takes the following form.

\begin{algorithm}[convex-concavely penalized discrete isometry flow]\label{alg:penisoflow}
 Choose a termination criterion $\estop>0$, a penalization parameter $\varepsilon>0$, a step size $\tau > 0$ and an initial value $y_h^0 \in \cA_h$, and set $k = 1$.\\
(1) Compute $d_t y_h^k \in \cF[y_h^{k-1}]$ such that 
\begin{equation}
\begin{aligned}
&(1+\tau)(\nabla \nabla_h d_t y_h^k, \nabla \nabla_h w_h) + \frac{\tau}{\varepsilon} \int_\o \Ih^1[d_t y_{h,3}^k w_{h,3} ] \dv{x} \\
& = -(\nabla \nabla_h y_h^{k-1}, \nabla \nabla_h w_h) - \frac{1}{\varepsilon} \int_\o \Ih^1[y_{h,3}^{k-1} w_{h,3}] \dv{x} - \frac{1}{2\varepsilon} \int_\o \Ih^1[p_\mathrm{ccv}(y_{h,3}^{k-1}) w_{h,3}] \dv{x} \\
 &\quad + \a \int_\o \hIh^1\{ \D_h w_h \cdot [\p_1 y_h^{k-1} \times \p_2 y_h^{k-1}] \} \dv{x} + \a \int_\o \hIh^1\{ \D_h y_h^{k-1} \cdot [ \p_1 w_h \times \p_2 y_h^{k-1}] \} \dv{x} \\
 &\quad + \a \int_\o \hIh^1\{ \D_h y_h^{k-1} \cdot [ \p_1 y_h^{k-1} \times \p_2 w_h ] \} \dv{x} + \int_\o \Ih^1[f w_h] \dv{x}
\end{aligned}
\end{equation}
for all $w_h \in \cF[y_h^{k-1}]$.\\
(2) Set $y_h^k = y_h^{k-1} + \tau d_t y_h^k$. Stop the iteration if $\|\nabla \nabla_h d_t y_h^k\|_{L^2(\o)} \le \estop$. Otherwise increase~$k$ via $k := k+1$ and continue with (1).
\end{algorithm}

\section{Numerical experiments}\label{sec:numexp}
In this section we illustrate the practical performance of the proposed methods with numerical experiments. Our code is based on the C\texttt{++} library DUNE~\cite{alugrid2016, duneI, duneII} and we used the direct solver from UMFPACK~\cite{umfpack} for the solution of the linear systems arising in every time step of Algorithm~\ref{alg:isoflow} and Algorithm~\ref{alg:penisoflow}, respectively. 
The stopping criterion for the iterations was chosen as $\| \nabla \nabla_h d_t y_h^k \|_{L^2(\o)} \le \varepsilon_\mathrm{stop} = 1.0 \times 10^{-3}$ in all of the following examples.
The visualizations of the computed deformations were obtained with the open-source application ParaView~\cite{paraview}. We note that in the visualizations we neglect the degrees of freedom corresponding to the derivatives of the deformed surfaces and restrict ourselves to the representations obtained from the nodal displacements. In order to asses the quality of the approximations, we define the discrete isometry error
\[
\d_\mathrm{iso}[y_h^k] = \| [\nabla y_h^k]^\top \nabla y_h^k] - I_2 \|_{L_h^\infty(\o)},
\]
which measures the deviation of a numerical solution from an actual isometry. The numerical equilibrium solution, i.\,e. the final iterate of any algorithm, will be denoted with $y_h^\infty$ in the following. We remark that we cannot always expect the solutions to represent the physical reality, as large deformations might lead to self intersections of the deformed surfaces.

\subsection{Clamped rectangular bilayer plate} \label{subsec:clamprecplate}
In the first experiment we consider a rectangular plate occupying the domain $\o = (-5,5) \times (-2,2)$ in its reference configuration, which is clamped horizontally along the Dirichlet boundary $\GD = \{-5\} \times [-2,2]$, i.\,e. the boundary conditions are given by $y_\DD = [x,0]^\top$ and $\phi_\DD = [I_2,0]^\top$. The curvature parameter is chosen as $\alpha = 2.5$.
It has been shown in~\cite{schmidt2007} that the minimizer of the continuous energy for this problem is given by a cylinder of height $4$ (the length of the clamped boundary) and radius $\alpha^{-1}$ with an energy of $125$.
For the computation of discrete solutions, we employ two triangulations $\Th$ and $\Th^\mathrm{sym}$ of the domain $\o$, both consisting of halved squares of side-length $h = 2^{-3}$ amounting to 5120 triangles in each triangulation. The triangles in $\Th$ only have two orientations, resulting in a nonsymmetrical grid, whereas the triangles in $\Th^\mathrm{sym}$ are arranged in a symmetrical fashion as it results from dividing squares of side length $2h$ along their lines of symmetry, cf. Figure~\ref{fig:rect-evolution}.

\begin{figure}
  \centering
  \begin{subfigure}[t!]{.48\textwidth}
    \centering
    \includegraphics[width=.95\linewidth]{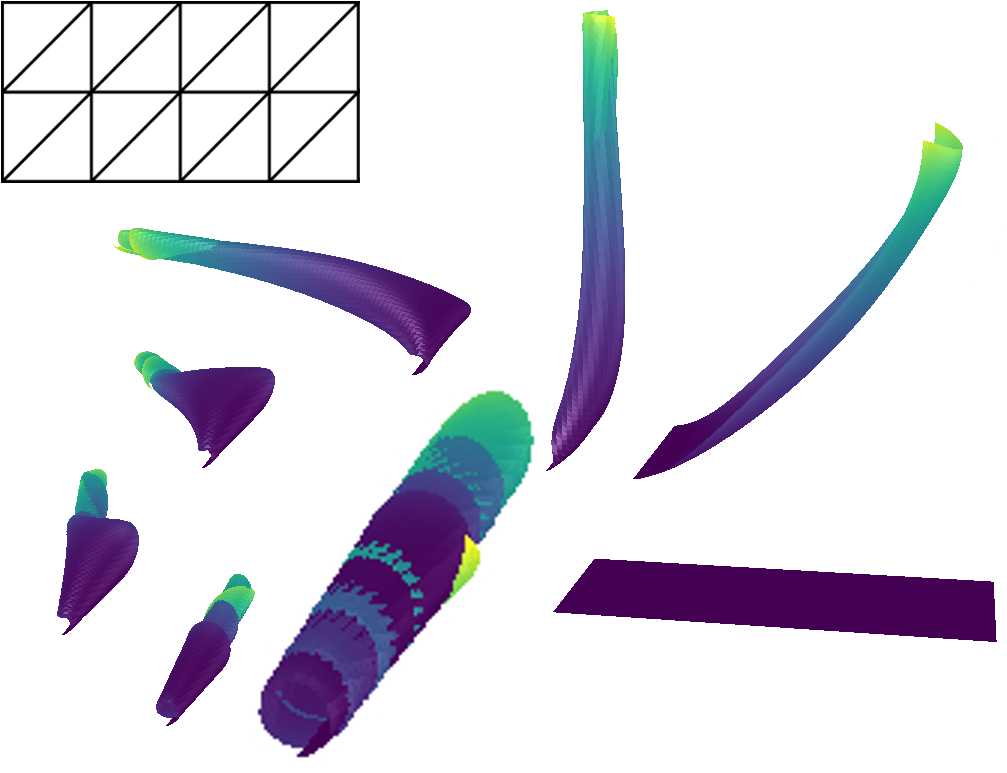}
    \caption{Nonsymmetric triangulation; snapshots of the discrete solution after (anticlockwise) 0, 20, 1.2k, 18k, 50k, 57k, 80k and 181,218 iterations of Algorithm~\ref{alg:isoflow}.}
  \end{subfigure}
  \hspace{8pt}
  \begin{subfigure}[t!]{.48\textwidth}
    \centering
    \includegraphics[width=.95\linewidth]{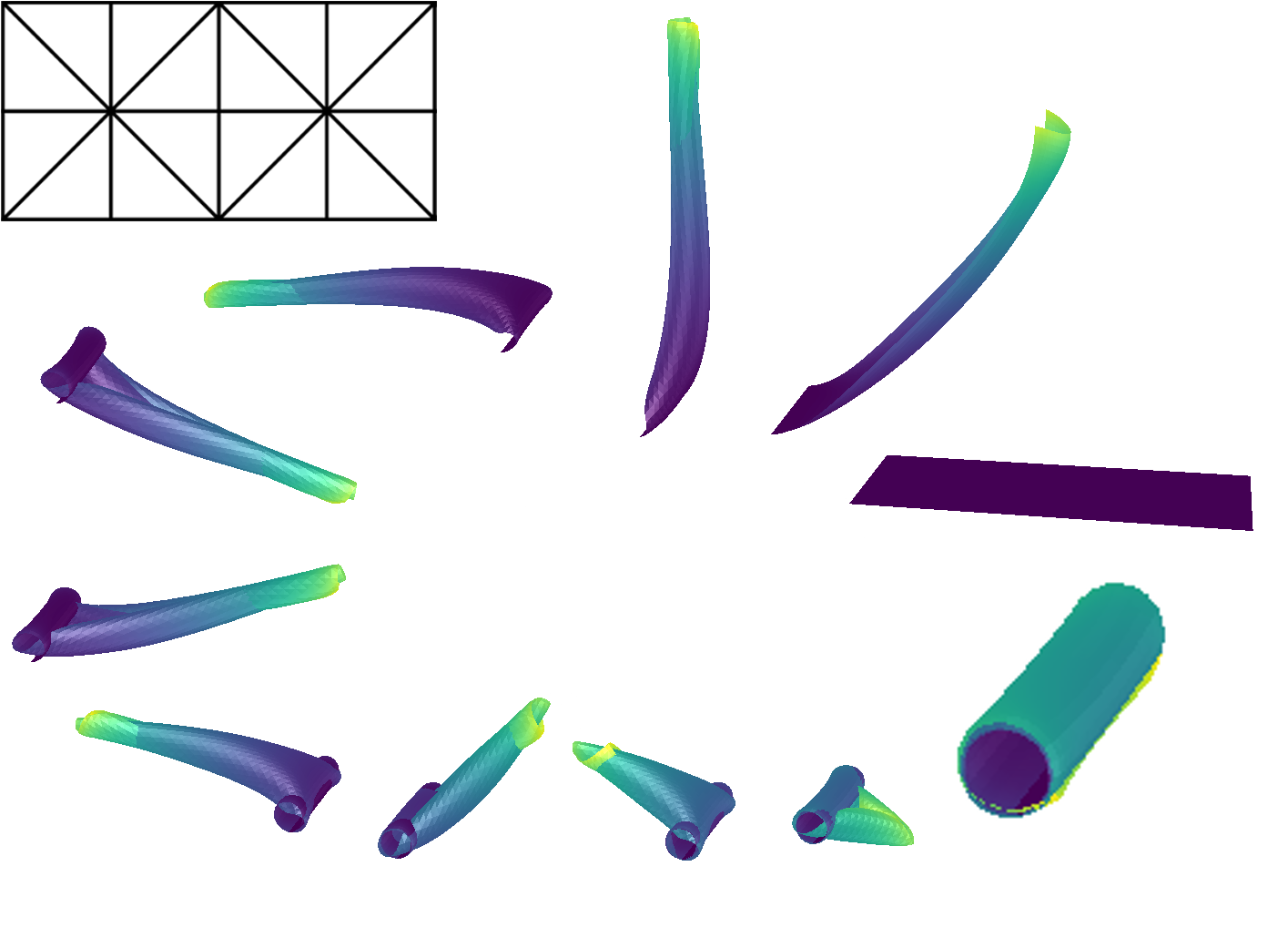}
    \caption{Symmetric triangulation; snapshots of the discrete solution after (anticlockwise) 0, 20, 1.2k, 18k, 50k, 57k, 80k, 100k, 105k, 110k and 176,629 iterations of Algorithm~\ref{alg:isoflow}.}
  \end{subfigure}
  \caption{Evolution of the discrete solutions in Section~\ref{subsec:clamprecplate}. The coloring of the surfaces corresponds to the magnitude of $|[\nabla y_h^k]^\top \nabla y_h^k] - I_2|$.}
  \label{fig:rect-evolution}
\end{figure}
\begin{figure}
  \begin{subfigure}{.49\textwidth}
    \centering
    \includegraphics[width=\linewidth]{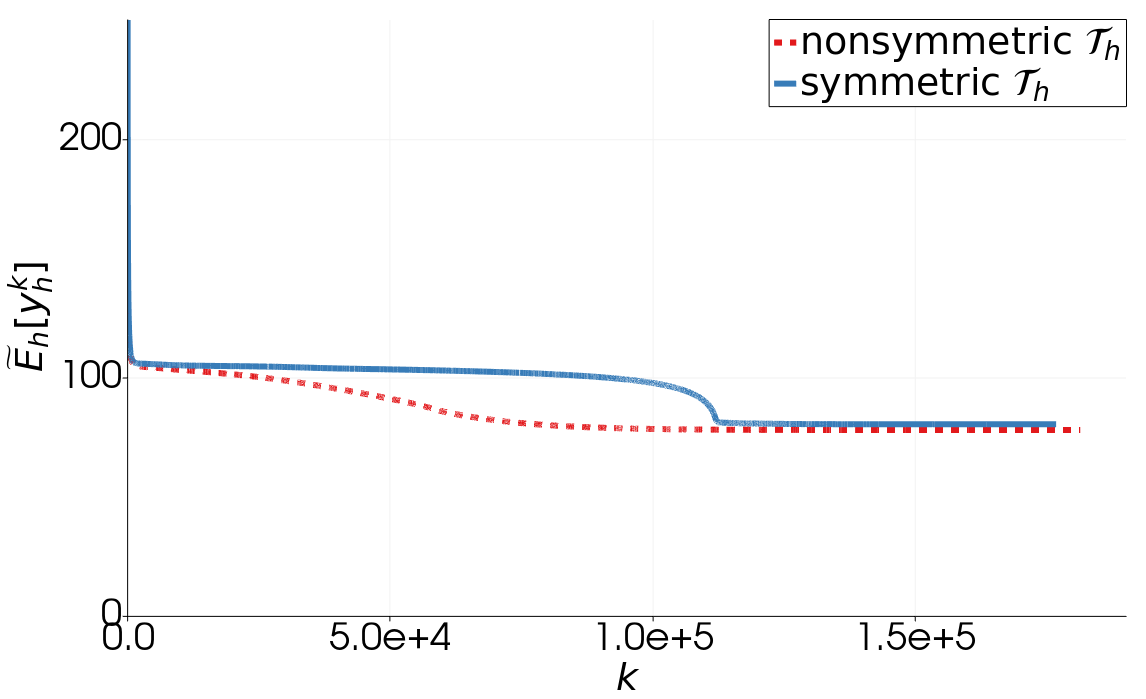}
  \end{subfigure}
  \begin{subfigure}{.49\textwidth}
    \centering
    \includegraphics[width=\linewidth]{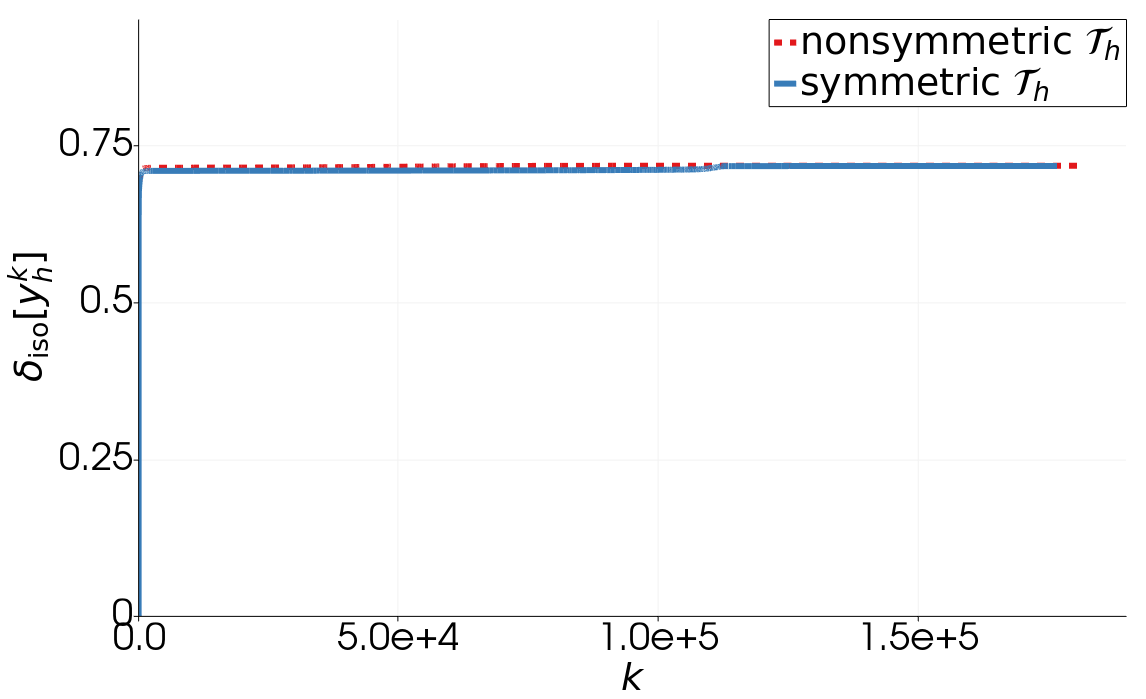}
  \end{subfigure}
  \caption{Energy decay (left) and limited growth (right) of the isometry error for the iterates of Algorithm~\ref{alg:isoflow} in Section~\ref{subsec:clamprecplate} using different triangulations. A significant decrease in the energy can be observed for the symmetrical triangulation after about 100k iterations, when the initially formed \emph{dog-ears} finally unfold and the plate coils into its cylindrical equilibrium shape.}
  \label{fig:rect-energy}
\end{figure}

In the realization of Algorithm~\ref{alg:isoflow} we chose the step size $\tau = h / 5$. The numerical equilibrium shape was reached after $181,218$ iterations in the case of the triangulation $\Th$ and after $176,629$ iterations in the case of the symmetrical triangulation $\Th^\mathrm{sym}$ with final energies of $78.060$ and $80.461$, respectively. 
The evolutions of the discrete solutions are depicted in Figure~\ref{fig:rect-evolution}. Notably, the shape of the final iterate bears a closer resemblance to a cylinder for the symmetrical triangulation, although the associated discrete energy is a bit higher. The decay of the discrete energies as well as the behaviour of the discrete isometry error are illustrated in Figure~\ref{fig:rect-energy} and support the assertions of Theorem~\ref{thm:iteration}. We note that the dependence of approximations on the triangulation disappears as meshes become finer.

\subsection{Corner clamped O-shaped plate}\label{subsec:oshape}
\begin{figure}
  \begin{subfigure}{.27\textwidth}
    \centering
    \includegraphics[width=.95\linewidth]{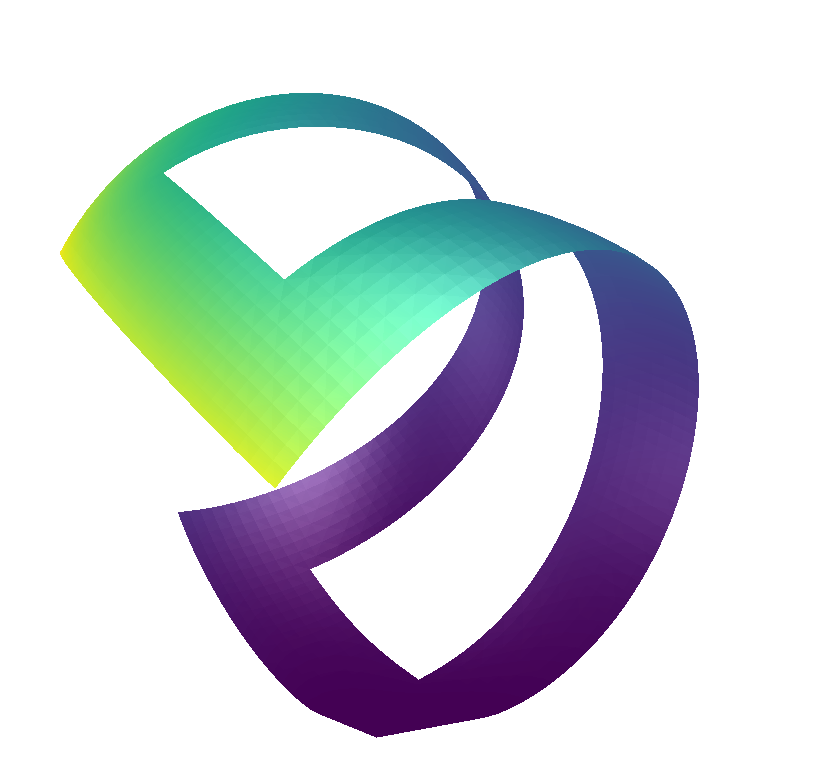}
  \end{subfigure}
  \begin{subfigure}{.27\textwidth}
    \centering
    \includegraphics[width=.95\linewidth]{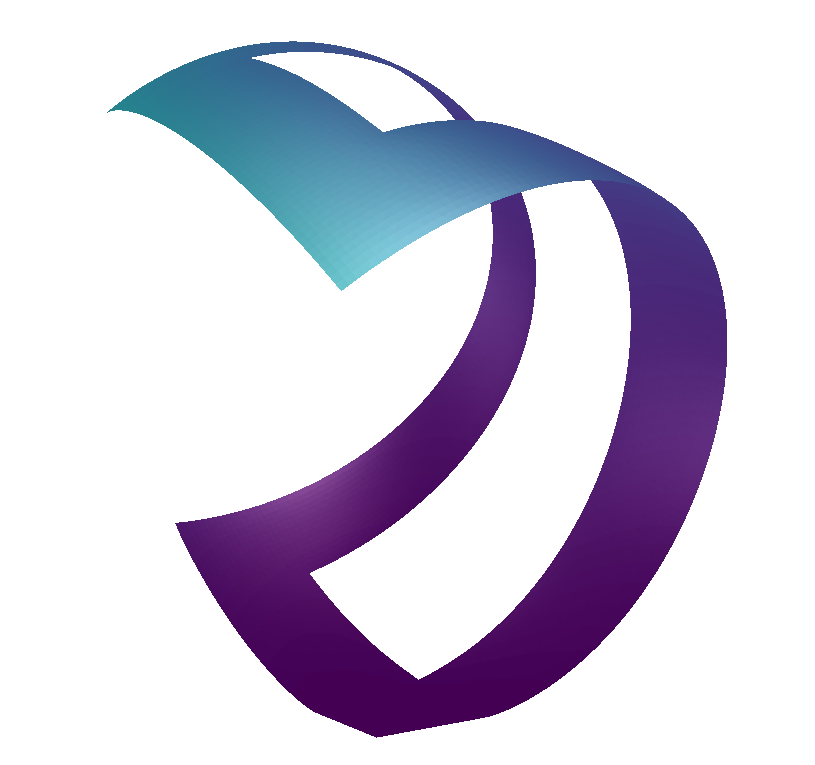}
  \end{subfigure}
  \begin{subfigure}{.27\textwidth}
    \centering
    \includegraphics[width=.95\linewidth]{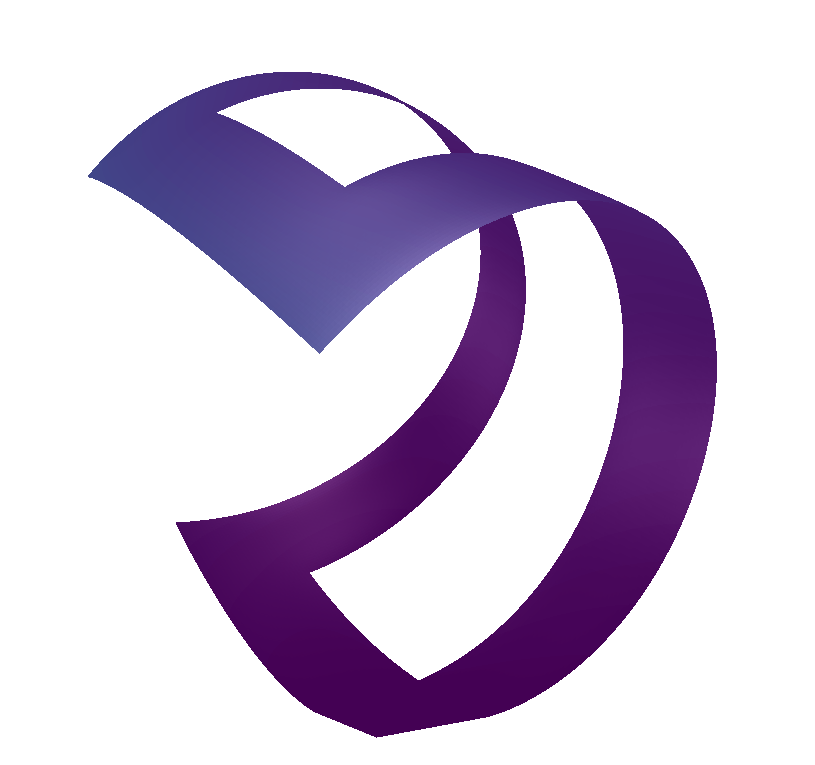}
  \end{subfigure}
  \begin{subfigure}{.12\textwidth}
    \centering
    \includegraphics[width=.95\linewidth]{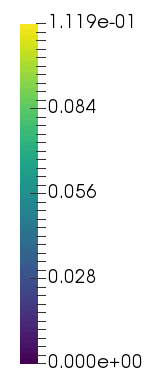}
  \end{subfigure}
  \caption{Numerical equilibrium states of the O-shaped plate in Section~\ref{subsec:oshape} for the triangulations $\Tl$, $\ell = 3,4,5$ with corresponding energies of $0.8869$, $1.444$ and $2.026$ (left to right) colored by the magnitude of $|[\nabla y_h^k]^\top \nabla y_h^k] - I_2|$.}
  \label{fig:oshape-equi}
\end{figure}

\begin{figure}
  \begin{subfigure}{.42\textwidth}
  \centering
  \begin{tabular}{r r r r}\hline
    $\ell$ & \#iter & $\tE_h[y_h^\infty]$ & $\d_\mathrm{iso}[y_h^\infty]$ \\\hline
    1 & 1922  & \num{-2.813e-1} & \num{5.181e-1} \\
    2 & 2829  & \num{4.133e-1}  & \num{2.388e-1} \\
    3 & 4513  & \num{8.869e-1}  & \num{1.119e-1} \\
    4 & 8589  & \num{1.444}     & \num{5.247e-2} \\
    5 & 20005 & \num{2.026}     & \num{2.363e-2} \\\hline
  \end{tabular}
  \end{subfigure}
  \hspace{10pt}
  \begin{subfigure}{.42\textwidth}
  \centering
  \begin{tabular}{r r r r}\hline
    $k$ & \#iter & $\tE_h[y_h^\infty]$ & $\d_\mathrm{iso}[y_h^\infty]$ \\\hline
    3  & 1283  & \num{9.332e-1} & \num{4.339e-1}  \\
    2  & 2318  & \num{1.228}    & \num{2.101e-1}  \\
    1  & 4407  & \num{1.370}    & \num{1.048e-1}  \\
    0  & 8589  & \num{1.444}    & \num{5.247e-2}  \\
    -1 & 16956 & \num{1.481}    & \num{2.625e-2}  \\\hline
  \end{tabular}
  \end{subfigure}
  \caption{Dependence on mesh size $h = 2^{-\ell}$ (left, $\tau =2^{-\ell}/5$) and step size $\tau = 2^{-\ell+k}/5$ (right, $h = 2^{-4}$) of iteration numbers, final energies and isometry error of the final iterate for the O-shaped plate in Section~\ref{subsec:oshape} for different triangulations and step sizes.}
  \label{fig:oshape-tab}
\end{figure}
In the second example we consider an O-shaped plate given by the domain $\o = (-5,5) \times (-2,2) \setminus [-4,4] \times [-1,1]$ which is horizontally clamped along the corner $\GD = \{-5\} \times [-2,-1] \cup [-5,-4] \times \{-2\}$, i.\,e. the boundary conditions are given by $y_\DD = [x,0]^\top$ and $\phi_\DD = [I_2,0]^\top$.
We first compute the resulting discrete deformations for the curvature parameter $\a = 0.5$ on a sequence $\Thsym = \Tlsym$ of triangulations, which consist of halved squares with side length $h = 2^{-\ell}$ resulting from the division of squares with side length $2h$ along their lines of symmetry. The finest triangulation consists of $49152$ triangles and the resulting solutions are defined by $227511$ degrees of freedom. For each triangulation the step size in Algorithm~\ref{alg:isoflow} was chosen as $\tau = h/5$. The resulting numerical equilibrium states for $\ell = 3,4,5$ are depicted in Figure~\ref{fig:oshape-equi}.

In another sequence of computations, we computed the discrete solutions on the triangulation $\ell = 4$ with step sizes $2^k \times (h/5)$, $k = 3,2,1,0,-1$. The iteration numbers, final energies and isometry errors for the computations are shown in the tables in Figure~\ref{fig:oshape-tab}. As expected, the isometry error depends linearly on the step size $\tau$. The same effect can be observed for the number of iterations that are necessary to reach the numerical equilibrium state.

\subsection{Transition between contact effects} \label{subsec:obst}
We consider the O-shaped plate from Section~\ref{subsec:oshape} which is given in its reference configuration by the domain $\o = (-5,5) \times (-2,2) \setminus [-4,4] \times [-1,1]$. 
As before, we assume that it is horizontally clamped along the corner $\GD = \{-5\} \times [-2,-1] \cup [-5,-4] \times \{-2\}$, i.\,e. the boundary conditions are given by $y_\DD = [x,0]^\top$ and $\phi_\DD = [I_2,0]^\top$. 
We assume an impenetrable obstacle to be placed at height $x_3 = 1$ and compute approximate solutions for different values of the curvature parameter $\alpha$ and without spontaneous curvature (i.\,e. $\alpha = 0$) in the presence of a body force $f = (0,0,c_f)^\top$ for the penalized energy~\eqref{eq:pen-discenergy} with Algorithm~\ref{alg:penisoflow}. 
We employ the triangulation $\mathcal{T}_3$ consisting of halved squares with side length $h = 2^{-3}$ and choose the step size $\tau = h / 50$ and the stopping criterion $\varepsilon_\mathrm{stop} = 10^{-3}$. Figure~\ref{fig:obst} shows the computed equilibrium states for the penalty parameter $\varepsilon = 1.25 \times 10^{-1}$ with $c_f = 2 \times 10^{-3}$, $c_f = 1 \times 10^{-2}$ and $c_f = 2 \times 10^{-2}$ and with $\alpha = 0.25$, $0.5$ and $0.75$, respectively. 

\begin{figure}
  \begin{subfigure}{.49\textwidth}
    \centering
    \includegraphics[width=\linewidth]{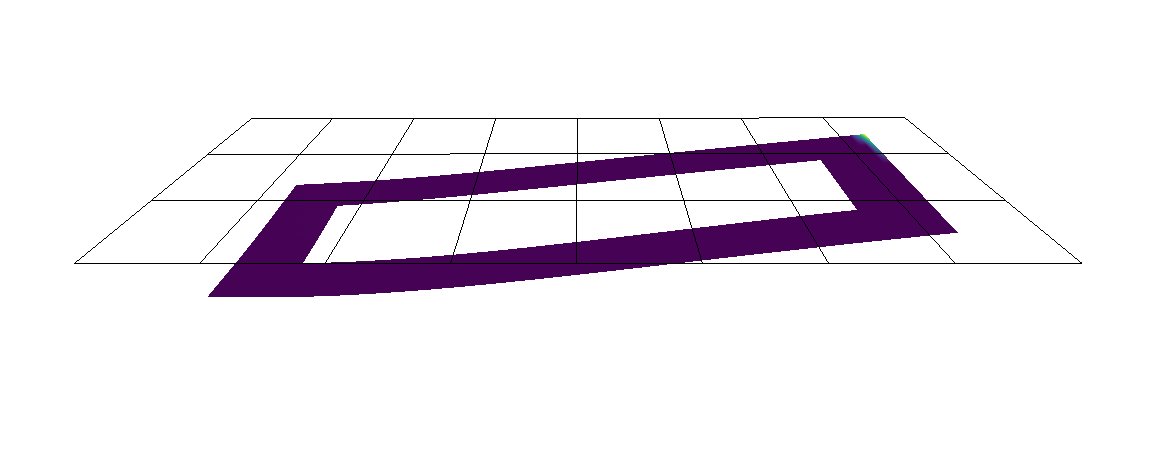}
  \end{subfigure}
  \begin{subfigure}{.49\textwidth}
    \centering
    \includegraphics[width=\linewidth]{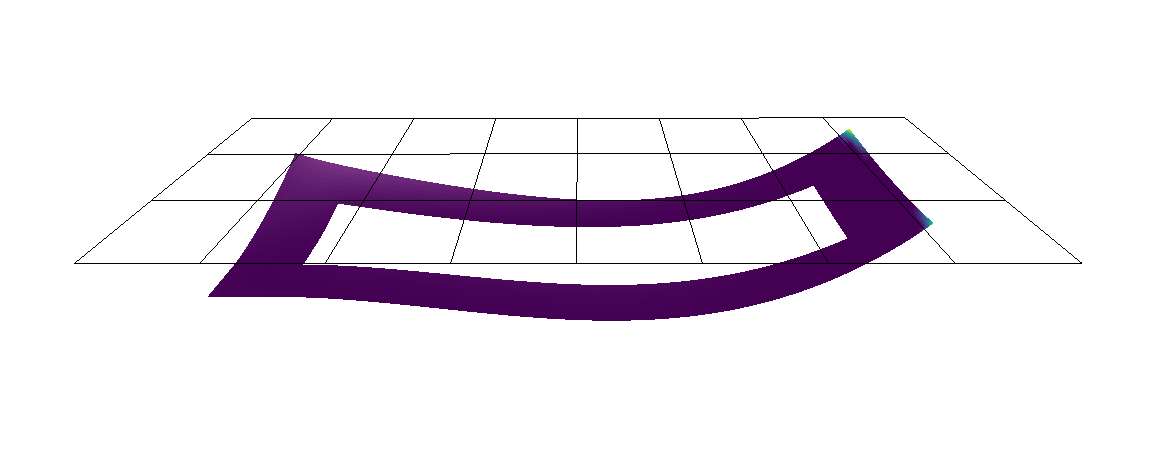}
  \end{subfigure}
  \begin{subfigure}{.49\textwidth}
    \centering
    \includegraphics[width=\linewidth]{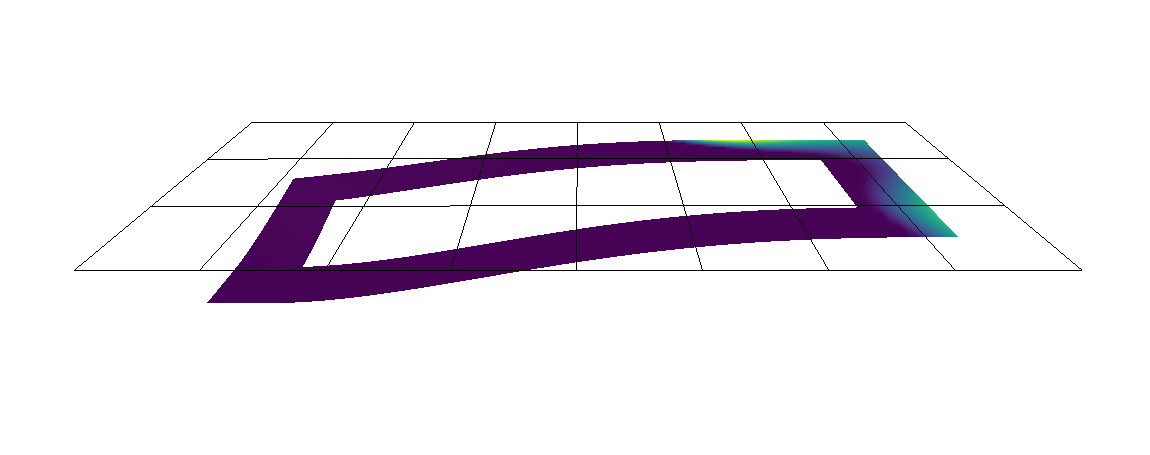}
  \end{subfigure}
  \begin{subfigure}{.49\textwidth}
    \centering
    \includegraphics[width=\linewidth]{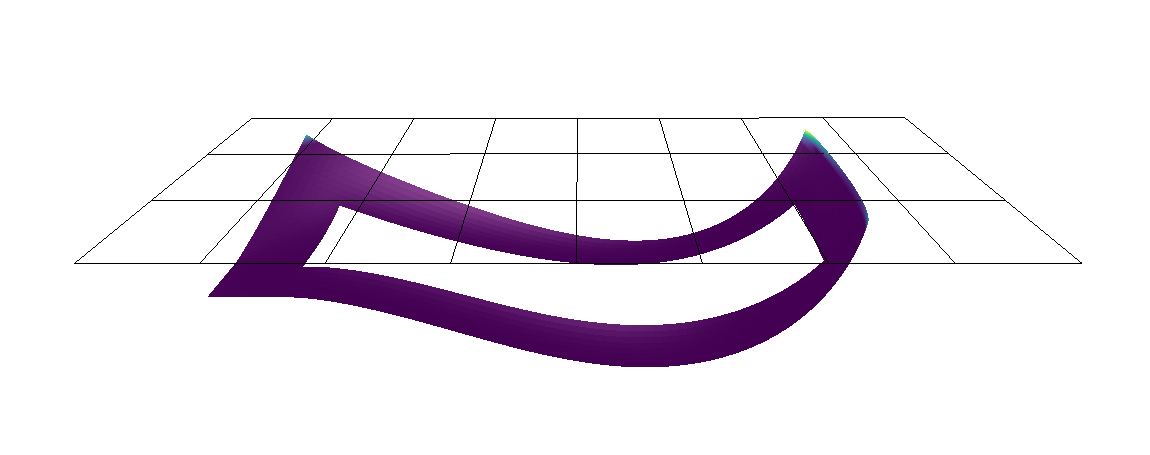}
  \end{subfigure}
  \begin{subfigure}{.49\textwidth}
    \centering
    \includegraphics[width=\linewidth]{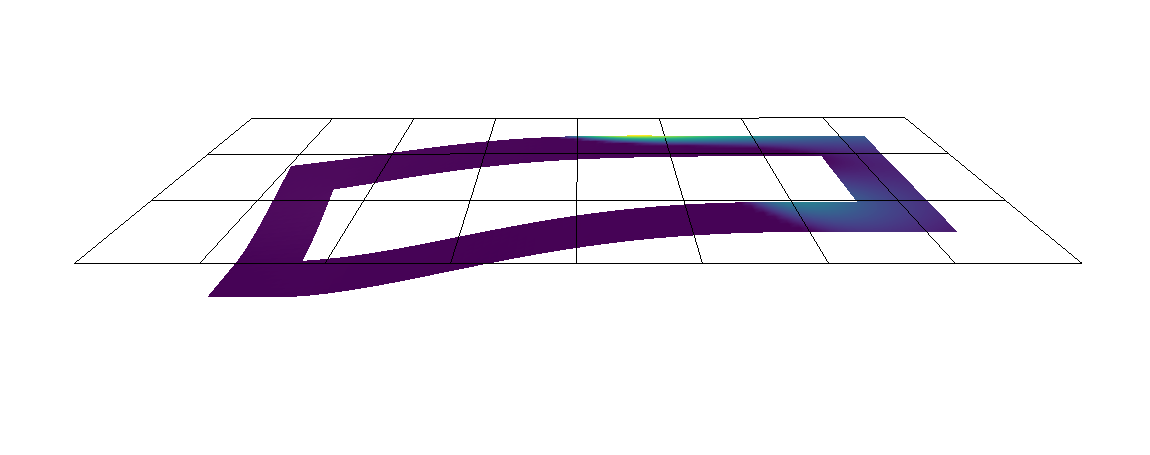}
  \end{subfigure}
  \begin{subfigure}{.49\textwidth}
    \centering
    \includegraphics[width=\linewidth]{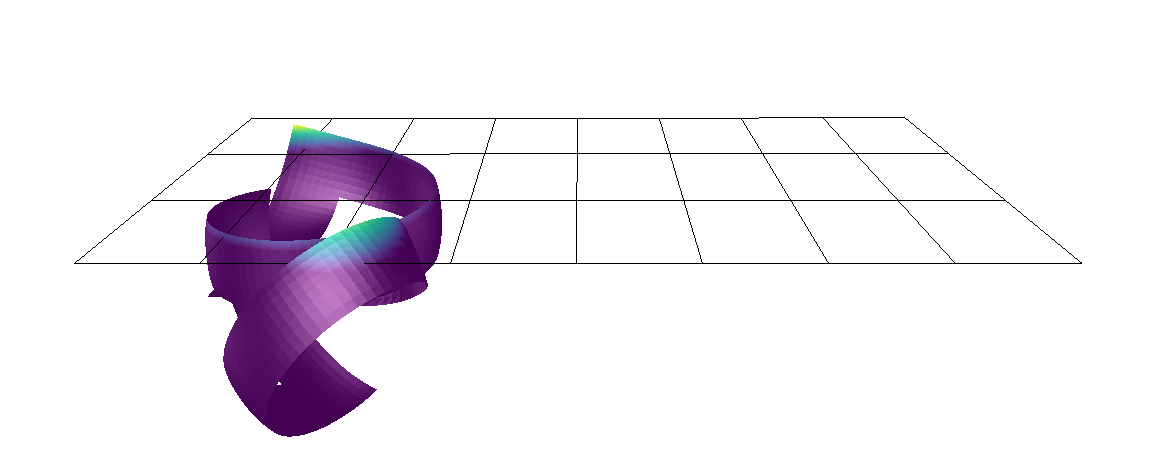}
  \end{subfigure}
  \caption{Numerical equilibrium states for the obstacle problem in Section~\ref{subsec:obst} with body force $c_f = 2 \times 10^{-3}$, $1 \times 10^{-2}$ and $2 \times 10^{-2}$ (left) and curvature parameter $\alpha = 0.25$, $0.5$ and $0.75$ (right), colored by the obstacle penetration $(y_{h,3}-1)_+$.}
  \label{fig:obst}
\end{figure}

For the smallest considered values of both $c_f$ and $\alpha$, the obstacle penetration is largest at the corner diametrically opposed to the clamped corner. In the case of spontaneous curvature, we observe only isolated contact points and no tendency towards the formation of a contact region for higher values of the curvature parameter $\alpha$. Instead, for higher values of $\alpha$, the plate curls into a ball-like shape underneath the obstacle where severe self-intersections can be observed. 

\begin{figure}
  \begin{subfigure}{.49\textwidth}
    \centering
    \includegraphics[width=\linewidth]{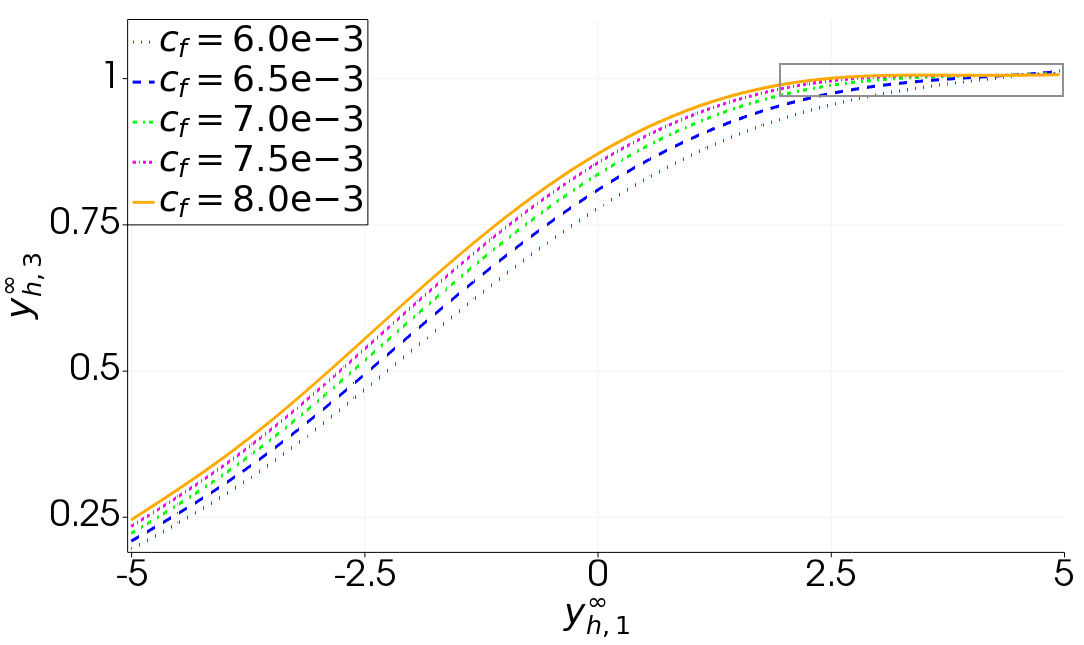}
  \end{subfigure}
  \begin{subfigure}{.49\textwidth}
    \centering
    \includegraphics[width=\linewidth]{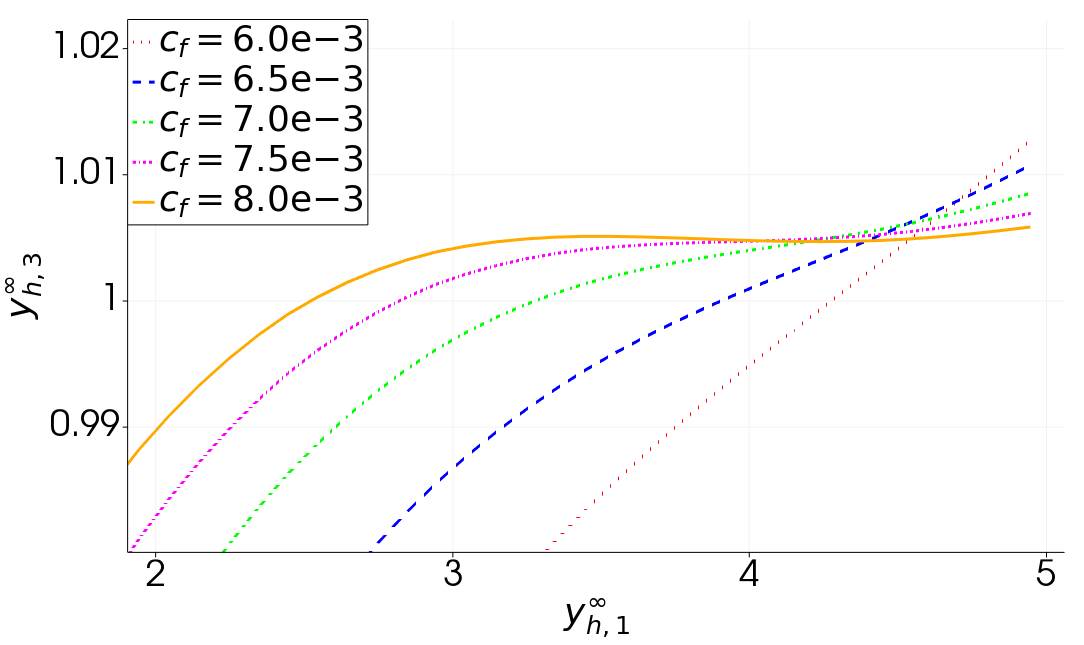}
  \end{subfigure}
  \caption{Plot of the $x$- and $z$-components of the displacement field for different magnitudes $c_f$ of the body force along the rear edge $[-5,5] \times \{2\}$ of the plate (left) and magnification of the critical part marked by the gray box, where the obstacle contact occurs (right).}
  \label{fig:contact}
\end{figure}

\begin{figure}
  \vspace{25pt}
  \centering
  \includegraphics[width=.49\linewidth]{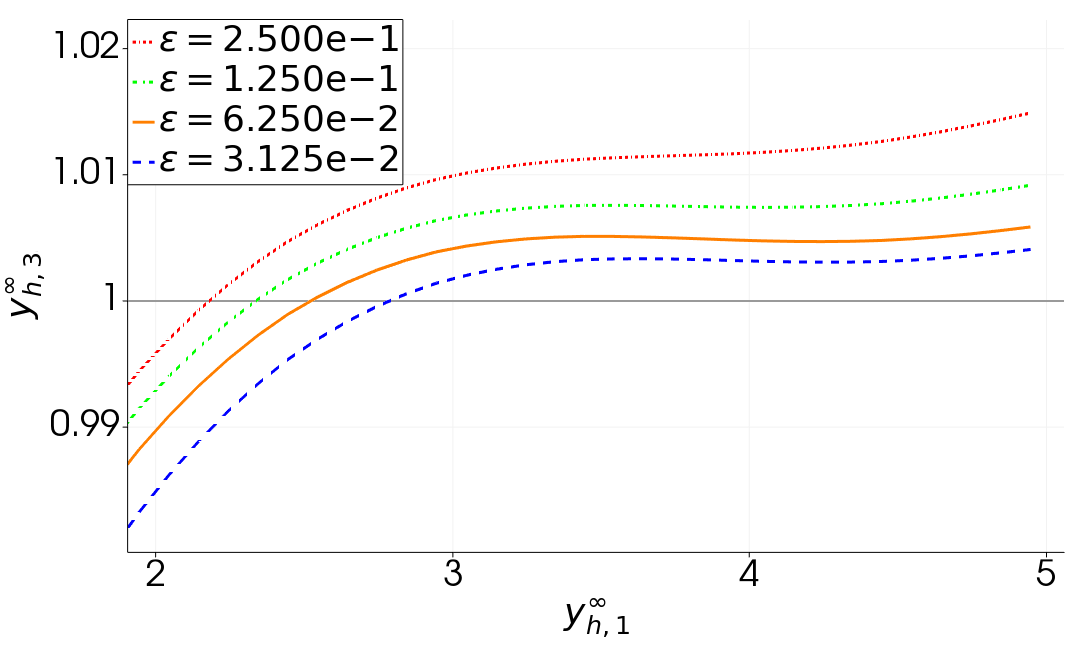}
  \caption{Influence of the penalty parameter $\varepsilon$ on the critical part of the deformed boundary side $[-5,5] \times \{2\}$ where obstacle penetration occurs for the body-force magnitude $c_f = 8.0 \times 10^{-3}$.}
  \label{fig:contact-eps}
\end{figure}

In the absence of spontaneous curvature (i.\,e. in the case $\alpha = 0$), our experimental findings indicate a change of qualitative properties of the contact region as the body force changes.
For small body forces, the mapping $x \mapsto y_h^\infty(x,2)$ (i.\,e. the rear edge of the deformed plate) is convexly shaped and the numerical solution implies that there is only one point of contact in the physical solution of the obstacle problem. For large body forces we experimentally observe the presence of inflection points along the $x$-axis indicating that in the physical solution one  could expect a contact region with positive surface measure. The change from contact point to contact region is further investigated in Figure~\ref{fig:contact}, where we plotted the $x$- and $z$-components of the equilibrium displacement field along the rear edge $[-5,5] \times \{2\}$ of the plate for the penalty parameter $\varepsilon = 6.25 \times 10^{-2}$ and several values of $c_f$. 

In Figure~\ref{fig:contact-eps} we plotted the $x$- and $z$-components of the final iterate along the critical part of the rear edge that were obtained for $c_f = 8.0 \times 10^{-3}$ with several penalization parameters $\varepsilon > 0$. The results of the simulations suggest that the change from contact point to contact region happens somewhere between $c_f = 7.0 \times 10^{-3}$ and $c_f = 8.0 \times 10^{-3}$.

The table in Figure~\ref{fig:obst-table} shows the iteration numbers, final energies $\tEpenh[y_h^\infty]$ and obstacle penetrations 
\[
\d_\mathrm{pen}[y_h^\infty] = \|(y^\infty_{h,3} - 1)_+\|_{L_h^\infty(\o)}
\]
for several values of $c_f$ and $\varepsilon$. The isometry errors are negligible in these computations (we have $\d_\mathrm{iso}[y_h^\infty] \le 1.070 \times 10^{-5}$ in all examples) , since the obstacle prevents large deformations leading to artificial in-plane stretching. The obstacle penetration $\d_\mathrm{pen}[y_h^\infty]$ seems to decrease with a higher order than $\varepsilon^{1/4}$ as indicated by Corollary~\ref{cor:penest}. We note that an appropriate coupling of the penalty parameter and the step size appears necessary to guarantee a consistency property of the numerical equilibrium state.
The reason for this lies in the semi-implicit treatment of the penalty functional. In our example, the body force contribution and the difference between the implicitly treated convex part and the explicitly treated concave part might cancel each other out in the discrete gradient flow, which corresponds to a lack of consistency in the discretization of the continuous flow. The step size should thus be chosen according to $\tau \le C c_f \varepsilon$ with a constant $C$ depending on the triangulation~$\Th$.

\begin{figure}
  \vspace{25pt}
  \centering
  \begin{tabular}{c c r r r r}\hline
    $c_f$ & $\varepsilon$ & \#iter & $\tEpenh[y_h^\infty]$ & $\d_\mathrm{pen}[y_h^\infty]$ \\ \hline
    \num{6.0e-3} & \num{5.0e-1}   & 5121  & \num{-6.844e-2} & \num{3.486e-2} \\
    \num{6.0e-3} & \num{2.5e-1}   & 8271  & \num{-6.821e-2} & \num{2.405e-2} \\
    \num{6.0e-3} & \num{1.25e-1}  & 14029 & \num{-6.804e-2} & \num{1.711e-2} \\
    \num{6.0e-3} & \num{6.25e-2}  & 24200 & \num{-6.788e-2} & \num{1.278e-2} \\\hline
    \num{8.0e-3} & \num{2.5e-1}   & 6773  & \num{-9.749e-2} & \num{1.483e-2} \\
    \num{8.0e-3} & \num{1.25e-1}  & 11397 & \num{-9.733e-2} & \num{9.114e-3} \\
    \num{8.0e-3} & \num{6.25e-2}  & 20034 & \num{-9.721e-2} & \num{5.818e-3} \\
    \num{8.0e-3} & \num{3.125e-2} & 35801 & \num{-9.708e-2} & \num{4.188e-3} \\\hline
  \end{tabular}
  \caption{Iteration numbers, final energies and obstacle penetrations of the final iterates for different penalty parameters and magnitudes of the body force in the example in Section~\ref{subsec:obst}.}
  \label{fig:obst-table}
\end{figure}

\bibliographystyle{siam}
\bibliography{references}

\end{document}